\documentclass[a4paper,10pt]{article}

\usepackage[utf8]{inputenc} 
\usepackage{amssymb,latexsym}
\usepackage{amsmath,amsthm}
\usepackage{graphicx}
\usepackage{color,epsfig}
\usepackage{physics}
\usepackage{hyperref}
\usepackage{cleveref}
\usepackage{enumerate}     
\usepackage{xcolor}
\usepackage{colortbl}
\usepackage{blindtext}
\usepackage{array}
\usepackage{tikz}
\usepackage{enumitem}
\usepackage{mathrsfs}
\usepackage{wrapfig}

\usepackage{mathtools}
\input epsf
\usepackage{epstopdf}
\usepackage{subcaption}
\usepackage[overload]{empheq}
\usepackage{multicol}
\usepackage{currvita}
\usepackage{float}
\usepackage[justification=centering]{caption}
\usepackage{changepage}
\usepackage{relsize}
\usepackage{svg}
\usepackage{etoolbox,refcount}
\usepackage{titling}

\oddsidemargin -0.21in
\evensidemargin -0.21in
\topmargin 0in
\textwidth 6.92in
\textheight 7.9 in

\newtheorem{theorem}{Theorem}
\newtheorem{lemma}{Lemma}

\newtheorem{problem}{Problem}

\newtheorem{remark}{Remark}
\newtheorem{claim}{Claim}

\newtheorem{conjecture}{Conjecture}

\def\cro{{\mbox {\sc cr}}}

\newcounter{countitems}
\newcounter{nextitemizecount}
\newcommand{\setupcountitems}{%
  \stepcounter{nextitemizecount}%
  \setcounter{countitems}{0}%
  \preto\item{\stepcounter{countitems}}%
}
\makeatletter
\newcommand{\computecountitems}{%
  \edef\@currentlabel{\number\c@countitems}%
  \label{countitems@\number\numexpr\value{nextitemizecount}-1\relax}%
}
\newcommand{\nextitemizecount}{%
  \getrefnumber{countitems@\number\c@nextitemizecount}%
}
\newcommand{\previtemizecount}{%
  \getrefnumber{countitems@\number\numexpr\value{nextitemizecount}-1\relax}%
}
\makeatother    
{\end{itemize}%
\unskip\computecountitems\ifnumcomp{\previtemizecount}{>}{3}{\end{multicols}}{}}

\title{1-planar unit distance graphs}

\thanksmarkseries{alph}
\author{
Panna Geh\'er\thanks{E\"otv\"os Lor\'and University, Budapest, Hungary, and Alfr\'ed R\'enyi Institute of Mathematics, Budapest, Hungary. \newline Email:~\href{mailto:geher.panna@ttk.elte.hu}{\tt geher.panna@ttk.elte.hu}.}
\and
G\'eza T\'oth\thanks{Alfr\'ed R\'enyi Institute of Mathematics, Budapest, Hungary. Email:~\href{mailto:geza@renyi.hu}{\tt geza@renyi.hu}.}
}

\begin{document}

\maketitle

\begin{abstract}
A matchstick graph is a plane graph with edges drawn as unit distance line segments. This class of graphs was introduced by Harborth who conjectured that a matchstick graph on $n$ vertices can have at most $\lfloor 3n - \sqrt{12n - 3}\rfloor$ edges. Recently, his conjecture was settled by Lavoll\'ee and Swanepoel. In this paper we consider $1$-planar unit distance graphs. We say that a graph is a $1$-planar unit distance graph if it can be drawn in the plane such that all edges are drawn as unit distance line segments while each of them are involved in at most one crossing. We show that such graphs on $n$ vertices can have at most $3n-\sqrt[4]{n}/15$ edges, which is almost tight. We also investigate some generalizations, namely $k$-planar and $k$-quasiplanar unit distance graphs.
\end{abstract}

\textbf{Key words:} Matchstick graphs, Unit distance graphs, $1$-planar graphs, Beyond planar graphs

\textbf{Mathematics Subject Classification:} 05C10
\section{Introduction}

\smallskip

A graph is called a matchstick graph if it can be drawn in the plane with no crossings such that all edges are drawn as unit segments. This graph class was introduced by Harborth in 1981 \cite{Harborth1, Harborth2}. He conjectured that the maximum number of edges of a matchstick graph with $n$ vertices is $\lfloor 3n - \sqrt{12n - 3}\rfloor$. He managed to prove it in a special case where the unit distance is also the smallest distance among the points \cite{Harborth74}. Recently, his conjecture was settled by Lavoll\'ee and Swanepoel \cite{LS22}.

For any $k\ge 0$, a graph $G$ is called $k$-planar if $G$ can be drawn in the plane such that each edge is invol\-ved in at most $k$ crossings. Let $e_k(n)$ denote the maximum number of edges of a $k$-planar graph on $n$ vertices. Since $0$-planar graphs are the well-known planar graphs, $e_0(n)=3n-6$ for $n\ge 3$. We have $e_1(n)=4n-8$ for $n\ge 4$ \cite{PT97}, $e_2(n)\le 5n-10$, which is tight for infinitely many values of $n$  \cite{PT97}, $e_3(n)\le 5.5n-11$, which is tight up to an additive constant  \cite{PRTT06} and $e_4(n)\le 6n-12$, which is also tight up to an additive constant  \cite{A19}. For general $k$, we have $e_k(n)\le c\sqrt{k}n$ for some constant $c$, 
which is tight apart from the value of $c$ \cite{PT97, A19}.

A $k$-planar unit distance graph is a graph that can be drawn in the plane such that each edge is a 
unit segment and involved in at most $k$ crossings. Let $u_k(n)$ be the maximum number of edges 
of a $k$-planar unit distance graph on $n$ vertices. Since $0$-planar unit distance graphs are 
exactly the matchstick graphs, by the result of Lavoll\'ee and Swanepoel, 
we have $u_0(n)=\lfloor 3n - \sqrt{12n - 3}\rfloor$. 
Clearly, $u_1(n)\ge u_0(n)$, and for most of the values of $n$, 
we do not have any 
better lower bound for $u_1(n)$ than the value of $u_0(n)$. 
That is, allowing to use one crossing on each edge does not seem to help, still a proper piece of the triangular grid is the best known construction. However, very recently
\v{C}ervenkov\'a \cite{C25} found a construction that has $u_0(n)+1$ or $u_0(n)+2$ edges, for infinitely
many values of $n$. 
Somewhat surprisingly, we prove an almost matching upper bound.

\begin{theorem}\label{1-planar}
For the maximum number of edges of a $1$-planar unit distance graph $u_1(n)$, we have
$$\lfloor 3n - \sqrt{12n - 3}\rfloor\le u_1(n)\le 3n-\sqrt[4]{n}/15.$$
\end{theorem}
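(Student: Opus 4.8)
The lower bound is immediate: a matchstick graph is in particular a $1$-planar unit distance graph (with no crossings at all), so $u_1(n)\ge u_0(n)=\lfloor 3n-\sqrt{12n-3}\rfloor$ by the theorem of Lavoll\'ee and Swanepoel. So the point is the upper bound. Fix a $1$-planar unit distance drawing of a graph $G$ with $n$ vertices and $e$ edges; we may assume $G$ connected. Let $c$ be the number of crossings; since each edge is in at most one crossing, the $2c$ crossing edges are distinct and the crossing pairs are pairwise edge-disjoint. Form the planarization $G^{\times}$ by turning each crossing into a new degree-$4$ vertex; it has $n+c$ vertices and $e+2c$ edges, hence, by Euler's formula, $2-n+e+c$ faces. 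Writing $\Phi:=\sum_{f}(\operatorname{len}(f)-3)\ge 0$ for the sum over the faces of $G^{\times}$, one gets the identity $e=3n-6+c-\Phi$. Thus \Cref{1-planar} is equivalent to the estimate $\Phi\ge c-6+\sqrt[4]{n}/15$: from the geometry one must extract a term $\ge c$ (to beat the trivial planar bound $3n-6$) and an extra gain of order $\sqrt[4]{n}$.

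The geometric heart is the following local lemma: \emph{if two unit segments $ab$ and $cd$ cross at a point $p$, then among the four faces of $G^{\times}$ meeting at $p$, no two faces forming an opposite pair of corners are both triangles.} Indeed, a triangular corner face at $p$ must be $\triangle(p,a,c)$ — its two sides at $p$ are the subsegments $pa$ and $pc$ (not further subdivided, as each crossing edge has a unique crossing), and its third side joins two original vertices, hence is a genuine edge of $G$ of length $1$. If two opposite corners were triangles $\triangle(p,a,c)$ and $\triangle(p,b,d)$, then $1=|ac|\le|pa|+|pc|$ and $1=|bd|\le|pb|+|pd|$; their sum is $|ab|+|cd|=2$, so both triangle inequalities are tight, forcing $p\in[a,c]\cap[b,d]$, which together with $p$ lying in the interiors of $ab$ and $cd$ makes $a,b,c,d$ collinear — impossible.

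From the lemma, every crossing has in each of its two opposite corner-pairs a face of length $\ge 4$. A discharging argument (give each face of length $\ell$ a charge $\ell-3$, to be distributed among the crossing vertices on its boundary, of which there are at most $\lfloor\ell/2\rfloor$ since no two crossing vertices are adjacent in $G^{\times}$) should then yield $\Phi\ge c$: for $\ell\ge 5$ a face has enough charge to give $1$ to each of its crossings, and the only deficient faces are quadrilaterals (charge $1$, but possibly bordering two crossings). To handle those, one uses the sharpened distance relations coming from the equality analysis in the lemma: if a quadrilateral face $Q=(p_1,w,p_2,z)$ is the "big'' face needed at the crossing $p_1$, then the corner opposite $Q$ at $p_1$ is a triangle $\triangle(p_1,w',z')$ (with $w',z'$ the far endpoints of the two unit edges through $p_1$) and one gets $|p_1w|+|p_1z|\le 1$; together with the lemma applied to the \emph{other} corner-pair at $p_1$, this should show that a crossing short of charge always has an alternative large face available, so no charge is lost and indeed $\Phi\ge c$.

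It remains to squeeze out the extra $\sqrt[4]{n}/15$, and I expect this to be the main obstacle. I would split on $c$. If $c\le\sqrt{12n-3}-\sqrt[4]{n}/15$, delete one edge from each crossing pair: the result is a matchstick graph on $n$ vertices with $e-c$ edges, so $e-c\le 3n-\sqrt{12n-3}$ by Lavoll\'ee--Swanepoel, and we are done. If $c$ is large, then when $\Phi-c$ is tiny the discharging above is almost tight, so almost every face of $G^{\times}$ is a triangle and almost every crossing sits in the rigid configuration of the lemma; $G^{\times}$ is then forced to resemble the triangular lattice with a controlled number of "gadget'' defects, and an isoperimetric-type estimate on the outer boundary of the drawing (its length in unit-length pieces is at least the perimeter of the convex hull of the $n$ vertices, which cannot be too short once the graph is this dense) should provide the missing $\Omega(\sqrt[4]{n})$ of charge through the outer face. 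Making this precise — in particular bounding how much ``area'' a crossing can absorb — is where the real work lies, and is presumably also why one only obtains $3n-\Theta(\sqrt[4]{n})$ rather than the $3n-\Theta(\sqrt{n})$ suggested by the matchstick case.
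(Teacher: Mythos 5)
Your lower bound, the planarization identity $e=3n-6+c-\Phi$, the opposite-corner lemma (which is correct, and close in spirit to the paper's Claim~1 about triangular faces), and the small-$c$ case are all sound; in particular the reduction ``if $c\le\sqrt{12n-3}-\sqrt[4]{n}/15$, delete one edge per crossing and apply Lavoll\'ee--Swanepoel'' is a clean and valid step. The discharging toward $\Phi\ge c$ (i.e.\ $e\le 3n-6$) is only sketched --- the quadrilateral deficiency is waved away with ``should show'' --- but a scheme in which each crossing draws $1/2$ from each of its two guaranteed non-triangular corners can be made to work, so I regard that part as repairable. (The paper does the analogous bookkeeping differently: it fixes a maximum plane subgraph $G_0$, splits each remaining edge into two halfedges at its crossing, and bounds the number of halfedges per face of $G_0$ against the number of edges needed to triangulate that face.)

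The genuine gap is the large-$c$ case, which is the heart of the theorem, and your structural claim there is false as stated. Near-tightness of the discharging does \emph{not} force almost all faces to be triangles: the discharging is exactly tight on quadrilateral faces carrying two crossings, so a drawing whose planarization (equivalently, whose maximum plane subgraph) is dominated by unit rhombi can have $\Phi-c$ bounded while containing essentially no triangles and bearing no resemblance to the triangular lattice. This rhombus-dominated regime is precisely the hard case, and it is where the paper does its real work: after showing that if neither triangles nor faces of size $\ge 5$ are numerous then at least $n/4$ faces of $G_0$ are rhombi, it organizes the rhombi into chains of consecutively adjacent rhombi with parallel shared sides, proves that the intersection of two chains is a chain, deduces that there are at least $\sqrt{n/2}$ maximal chains, and observes that each chain end borders a non-quadrilateral face --- forcing either $\Omega(\sqrt n)$ triangles (in which case an isoperimetric bound on the total area shows the outer face has length $\Omega(n^{1/4})$, wait, $\Omega(\sqrt{f_3})$, yielding the loss) or $\Omega(\sqrt n)$ worth of large faces, and in either case a deficit of order $\sqrt[4]{n}$. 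Nothing in your sketch plays the role of this chain argument, and your proposed ``near-triangular-lattice plus isoperimetry on the outer boundary'' route would have to defeat exactly the rhombus configurations it ignores; this is also where the exponent $1/4$ (rather than the $1/2$ of the matchstick bound) actually comes from. As written, the upper bound is therefore not proved.
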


For general $k$, the best known lower bound is due to G\"unter Rote (personal communication, 2023).

\begin{theorem}\label{rote} (Rote)
For the maximum number of edges of a $k$-planar
unit distance graph $u_k(n)$, we have
$$u_k(n) \geq 2^{\Omega\left(\log k/\log\log k \right)}n.$$
\end{theorem}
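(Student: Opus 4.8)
The plan is to take Erd\H{o}s's classical construction of a planar point set with many unit distances --- a suitably rescaled integer grid --- but to choose the scale so delicately that in the resulting straight-line drawing no edge meets more than $k$ others. Fix a parameter $j$, let $R=R_j$ be the product of the first $j$ primes congruent to $1\pmod 4$, and let $D=D_j$ be the number of \emph{primitive} vectors $(a,b)\in\mathbb Z^2$ (i.e.\ with $\gcd(a,b)=1$) satisfying $a^2+b^2=R^2$. A short computation with the factorisation of $R$ in the Gaussian integers gives $D_j=4\cdot 2^{j}$, and the prime number theorem for the progression $1\pmod 4$ gives $\log R_j=(1+o(1))\,j\log j$. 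Now let $G$ be the $L\times L$ integer grid scaled by $1/R$, where $L=\lfloor\sqrt n\rfloor$ (padding with isolated vertices to reach exactly $n$ vertices), with an edge joining two grid points precisely when their difference is one of these $D$ primitive vectors of length $R$; after the scaling every such difference has Euclidean length $1$, so $G$ is drawn as a unit distance graph. The point of restricting to \emph{primitive} difference vectors is that a primitive $(a,b)$ has no lattice point strictly between $0$ and $(a,b)$, so two parallel --- hence equal up to sign --- primitive edges meet in at most an endpoint and no edge passes through a vertex; thus the drawing has no overlapping edges and is an honest topological drawing.

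Next I would estimate the number of edges of $G$ and the number of crossings on each edge. Every grid point at $\ell_\infty$-distance at least $R$ from the boundary has degree exactly $D$ (all $D$ of its prospective neighbours lie in the grid), while every grid point has degree at most $D$. Since $n\ge 16R^2$ forces $L\ge 4R$, at least $(L-2R)^2\ge L^2/4\ge n/8$ grid points are this deep, so $G$ has at least $nD/16$ edges. For the crossings, fix a unit edge $e$: any edge that crosses $e$ has an endpoint within Euclidean distance $1$ of $e$, hence in the ``racetrack'' region $\mathcal R=\{x:\operatorname{dist}(x,e)\le 1\}$, whose area is at most $2+\pi$. The scaled grid has $R^2$ points per unit area, so $\mathcal R$ contains at most $C_0R^2$ grid points for an absolute constant $C_0$; each of them has at most $D$ incident edges, so $e$ is crossed by at most $C_0R^2D$ edges.

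To finish, I would pick $j=j(k)$ to be the largest integer with $C_0R_j^2D_j\le k$. From $\log R_j=(1+o(1))\,j\log j$ and $D_j=4\cdot 2^{j}$ one gets $C_0R_j^2D_j=2^{(2+o(1))\,j\log j}$, so the constraint reads $(2+o(1))\,j\log j\le\log k$, which yields $j=(1+o(1))\dfrac{\log k}{2\log\log k}$ and hence $D_j\ge 2^{j}=2^{\Omega(\log k/\log\log k)}$. For this choice of $j$ and every $n$ with $n\ge 16R_j^2$ --- that is, for all $n$ at least polynomially large in $k$ --- the graph $G$ is a $k$-planar unit distance graph on $n$ vertices with at least $nD_j/16=2^{\Omega(\log k/\log\log k)}\,n$ edges, as claimed.

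The crux is exactly this last calibration. The crossing budget only buys $R^2D\lesssim k$, so the factor $R^2$ looks ruinous; what rescues the construction is Erd\H{o}s's observation that a product of small primes $\equiv 1\pmod 4$ has squared length admitting nearly a power's worth of primitive representations, $D_j=R_j^{\Theta(1/\log\log R_j)}$. This lets us afford $R$ as large as $k^{1/2-o(1)}$ while still extracting $D=k^{\Omega(1/\log\log k)}$ edges at every vertex. The remaining ingredients --- the lattice-point count inside $\mathcal R$, the harmless $O(RL)$ lower-degree boundary vertices, and the primitivity trick that eliminates overlapping collinear edges --- are routine, so I do not anticipate an obstacle beyond making these number-theoretic estimates fit together.
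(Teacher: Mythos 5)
Your proposal is correct and is essentially the paper's own argument: the Erd\H{o}s grid construction with a distance chosen as a product of primes $\equiv 1\pmod 4$, an area (``racetrack'') bound showing each edge is crossed by at most $O(R^2D)\le k$ edges, and a final calibration of the number of primes against $k$ yielding degree $2^{\Omega(\log k/\log\log k)}$. The only differences are cosmetic --- you use integer distance $R$ with primitive representations of $R^2$, while the paper uses distance $\sqrt r$ with a square-free $r$ (which makes all representations automatically primitive), and you spell out the explicit requirement $n\gtrsim k$ that the paper leaves implicit.
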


We include the proof in this note. We have the following upper bound.

\begin{theorem}\label{k-planar} For any $n, \, k\ge 0$,
we have
$$u_k(n)\le c\sqrt[4]{k}n$$ for some $c>0$.
\end{theorem}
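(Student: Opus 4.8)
The plan is to combine two ingredients: the known bound on $k$-planar graphs, $e_k(n)\le c'\sqrt{k}\,n$, and the fact that unit distance graphs are sparse, i.e.\ a unit distance graph on $m$ vertices has at most $O(m^{4/3})$ edges (the Spencer--Szemer\'edi--Trotter bound). Neither alone gives the claimed $O(\sqrt[4]{k}\,n)$: the first is off by a $\sqrt[4]{k}$ factor, and the second ignores planarity entirely. The idea is that if a $1$-planar-style drawing with $k$ crossings per edge is ``locally'' very dense, then within a bounded region we see many unit segments pairwise close together, and the crossing-rich part must look like a small unit distance graph, which cannot be too dense.

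\medskip

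Here is the sketch in more detail. Suppose $G$ is a $k$-planar unit distance graph on $n$ vertices with $e$ edges; assume $e\ge c\sqrt[4]{k}\,n$ and aim for a contradiction. First I would localize: partition the plane into cells of diameter $O(1)$ (say, unit squares, or squares of side length proportional to the unit) so that every edge, being a unit segment, lies in a bounded number of neighboring cells. By averaging, some bounded cluster of cells contains a subgraph $H$ on $m$ vertices with at least roughly $\tfrac{e}{n}\cdot m$ edges, and this subgraph is still drawn with at most $k$ crossings per edge. Now within this bounded region, the total number of crossings in $H$ is at most $\tfrac12 k\,|E(H)|$; on the other hand, I want to argue that the region being geometrically small forces many crossings, using that a unit distance graph localized in a bounded disk still cannot have more than $O(m^{4/3})$ edges. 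The point is: if $|E(H)|$ is much larger than a planar bound $3m$, then $H$ has $\Omega(|E(H)|)$ crossings, but each crossing pair is a pair of unit segments in a bounded region; counting incidences/crossings among unit segments in a bounded region via Szemer\'edi--Trotter gives a ceiling on how many such crossing pairs there can be relative to $m$, and balancing this against $k\cdot|E(H)|$ yields $|E(H)|=O(\sqrt[4]{k}\,m)$, contradicting the choice of $c$.

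\medskip

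More concretely, the crossing-number route seems cleanest. For a graph drawn with at most $k$ crossings per edge we have $\cro(G)\le \tfrac12 k e$. Combining with the crossing lemma variant for unit distance graphs — one should have, for a unit distance graph, a bound of the shape $\cro(G)\ge c_1 e^3/n^2 - c_2 n$ improved using the fact that the $\le e^2$ crossing pairs are actually pairs of unit segments and hence their number is itself $O((e+n)^{4/3})$ or can be bounded via a cutting/partition argument — gives an inequality $e^{3}/n^{2}\lesssim k e$ only after we have used sparsity to beat the trivial $\binom{e}{2}$ count on crossing pairs. The honest statement: the number of pairs of unit segments that cross is $O(e^{4/3})$-type (via Szemer\'edi--Trotter applied to the $2e$ endpoints and the $e$ unit-length lines they span, or via a direct incidence bound on unit circles), so $\tfrac12 k e \ge \cro(G)$ is consistent only when $e$ is small; turning the knobs, $e^{4/3}\gtrsim \cro(G)$ from above and $\cro(G)\gtrsim e^3/n^2$ from the crossing lemma would already give $e\lesssim n$, so the real work is extracting the $\sqrt[4]{k}$ saving rather than losing it, which is exactly where the $k$-crossings-per-edge hypothesis must be fed back in via the local-density/partition step above rather than the global crossing lemma.

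\medskip

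The main obstacle, I expect, is getting the exponents to line up so that the final bound is $k^{1/4}$ and not $k^{1/2}$ or $k^{1/3}$: the generic $k$-planar argument loses $\sqrt{k}$, and one needs the extra unit-distance sparsity to recover the square root of that, i.e.\ another $k^{1/4}$. Concretely this means the Szemer\'edi--Trotter-type estimate on crossing pairs of unit segments has to be applied at the right scale — after localizing to regions of bounded diameter — and the averaging/partition argument has to be set up so that the $O(m^{4/3})$ unit-distance bound is compared against $k\cdot(\text{edges in the region})$ rather than against the global edge count. I would also need to be careful that localizing into bounded cells only loses a constant factor in the edge count (true, since each unit edge touches $O(1)$ cells) and that crossings are not over- or under-counted at cell boundaries; these are routine but must be done to make the constant $c$ explicit-in-principle.
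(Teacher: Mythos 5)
There is a genuine gap: the argument never produces the one inequality that makes $k^{1/4}$ possible, namely a \emph{strengthened lower bound} on the crossing number of unit distance graphs of the form $\cro(G)\ge \alpha\, e^5/n^4$ (for $e$ above a linear threshold). The only useful upper bound coming from the hypothesis is $\cro(G)\le ke/2$, so to conclude $e=O(k^{1/4}n)$ one must play this against a lower bound on $\cro(G)$ that grows like $e^5/n^4$; the standard Crossing Lemma ($e^3/n^2$) gives only $e=O(\sqrt{k}\,n)$, exactly as you note. Your proposed fixes go in the wrong direction or are false. A Szemer\'edi--Trotter-type count of crossing pairs of unit segments is an \emph{upper} bound on crossings and therefore cannot improve a lower bound; moreover the claim that the number of crossing pairs among $e$ unit segments is $O(e^{4/3})$ is simply not true (two nearly parallel bundles of $e/2$ unit segments crossing a perpendicular bundle give $\Theta(e^2)$ crossings, and unit-distance drawings realizing many crossings exist, e.g.\ a scaled Erd\H{o}s grid superimposed with a slightly rotated copy). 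The localization step also buys nothing: restricting to a cell of bounded diameter does not improve the $O(m^{4/3})$ unit-distance bound (the dense constructions can be scaled to bounded diameter), so after averaging you face the identical problem ``show $|E(H)|=O(k^{1/4}m)$'' with no new leverage, which makes the reduction circular. Your final paragraph in effect concedes that the $k^{1/4}$ saving is still to be extracted; that extraction is the entire content of the theorem.

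The paper obtains the missing ingredient from a generalization of the Crossing Lemma for \emph{drawing styles} (Kaufmann--Pach--T\'oth--Ueckerdt): for a monotone, split-compatible style satisfying (1) a linear bound when there are no crossings, (2) a bisection-width bound $b_{\cal D}(G)\le k_2\sqrt{\cro_{\cal D}(G)+\Delta(G)e+n}$, and (3) a density ceiling $e\le k_3 n^{b}$, one gets $\cro_{\cal D}(G)\ge \alpha\, e^{1/(b-1)+2}/n^{1/(b-1)+1}$. Taking the style ``edges are unit segments'' (with a careful convention allowing coincident vertices so that vertex splits stay in the style), condition (3) is the Spencer--Szemer\'edi--Trotter bound with $b=4/3$, which yields $\cro_{\cal D}(G)\ge \alpha\, e^5/n^4$; combined with $\cro_{\cal D}(G)\le ke/2$ this gives $e\le \beta k^{1/4} n$. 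Note where the $n^{4/3}$ sparsity enters: not as a bound on crossing pairs, but as the density ceiling in a bisection-width divide-and-conquer that amplifies the exponent in the crossing lower bound. If you want to salvage your outline, this amplification (and the verification that the unit-segment drawing style is monotone and split-compatible, since ``unit distance graph'' is not a monotone graph property) is what has to replace both the localization step and the erroneous $O(e^{4/3})$ crossing-pair count.
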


A graph is called $k$-quasiplanar if it can be drawn in the plane with no $k$ pairwise crossing edges. The following is a long-standing conjecture \cite{BMP05}. 

\begin{conjecture}
For any $k>1$, a $k$-quasiplanar graph on $n$ vertices 
can have at most $c_kn$ edges for some $c_k>0$.
\end{conjecture}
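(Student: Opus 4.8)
The plan is to prove the bound by induction on $n$, reducing an arbitrary $k$-quasiplanar drawing to smaller instances together with a convex base case for which a clean linear (indeed optimal) bound is already available. The base case is the convex geometric setting: if the $n$ vertices lie in convex position and every edge is drawn as a straight chord, then the theorem of Capoyleas and Pach gives at most $2(k-1)n-\binom{2k-1}{2}$ edges, which is already of the desired form $c_kn$. The aim of the induction is to show that a general topological drawing can be cut into pieces each resembling the convex case, at the cost of only a bounded (depending on $k$) number of edges per vertex.

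First I would pass to a clean normal form. I would argue that it suffices to bound simple $k$-quasiplanar topological graphs, in which any two edges meet at most once and adjacent edges do not cross; non-simple drawings should be handled by an uncrossing/redrawing step that removes repeated crossings between a fixed pair of edges without creating a new family of $k$ pairwise crossing edges. I would then exploit the structural consequence of $k$-quasiplanarity: the hypergraph whose hyperedges are the maximal pairwise-crossing families is $(k-1)$-bounded, so by a Dilworth/Ramsey-type argument the edges of the drawing split into few classes within which the crossing pattern is highly structured (for instance laminar, or linearly ordered along a transversal curve).

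Next comes the recursive decomposition. I would fix a cutting curve, either a balanced separator of the planarization (the plane graph obtained by turning crossings into vertices) or a sweep line in an $x$-monotone realization, chosen so that it meets few edges and divides the remaining vertices into two parts of comparable size. Edges lying wholly on one side are inherited by that subproblem and recursed upon; the edges meeting the cutting curve are deleted and charged once to the current level. Summing these charges over the recursion tree, and adding the convex base-case contribution at the leaves, would produce the claimed linear bound, with $c_k$ controlled by the per-level charge and the depth of the recursion.

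The main obstacle is exactly the accounting in this last step. With the tools presently available, the per-level charge multiplied by the depth of the recursion yields a bound of shape $c_k\,n\log n$ rather than $c_k\,n$; this is precisely the barrier reached by Valtr and by Fox and Pach, whose arguments give $n(\log n)^{O(\log k)}$ in general and $c_k\,n\log n$ in the simple case. Removing the logarithmic factor demands a decomposition whose total edge-charge is linear, not logarithmic, per vertex, and no such decomposition is known for $k\ge 5$. The most promising route I see is to replace the generic separator by one adapted to the laminar structure guaranteed by $k$-quasiplanarity, so that each edge is charged only a bounded number of times over the entire recursion, mirroring the way the optimal convex bound avoids any logarithmic loss; making this adaptation rigorous is the crux on which a full proof of the conjecture hinges.
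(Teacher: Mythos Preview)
The statement you are attempting to prove is explicitly labeled a \emph{conjecture} in the paper and the paper does not prove it; immediately after stating it, the authors note that it has been verified only for $k\le 4$ and cite the best known general upper bounds of $n(\log n)^{O(\log k)}$ and $O\bigl(n(\log n)^{4k-16}\bigr)$. There is therefore no ``paper's own proof'' to compare your proposal against.

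As for the proposal itself, you have essentially written a candid description of why the problem is open rather than a proof. Your decomposition via a balanced separator of the planarization, charging the edges cut at each level, is exactly the mechanism that produces the known $c_k\,n\log n$-type bounds, and you correctly identify in your final paragraph that the logarithmic loss is unavoidable with this accounting. The suggestion to ``replace the generic separator by one adapted to the laminar structure'' is a hope, not an argument: no such structure theorem is established, and the claim that $k$-quasiplanarity yields a usable laminar decomposition of the edge set is precisely the missing ingredient that would resolve the conjecture. In short, your outline reproduces the barrier the literature is stuck at and does not get past it; for $k\ge 5$ the conjecture remains open.
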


The conjecture has been verified only for $k\le 4$ \cite{AT07, A09}. In general, the best known upper bound is $n(\log n)^{O(\log k)}$ \cite{FP14} (see also \cite{FPS24}) and $O(n\left(\log n)^{4k-16}\right)$ for $k \geq 4$ \cite{PRT06}.

For {\em geometric graphs}, that is, where edges are drawn as straight line segments, the best known upper bound is $O(n\log n)$ \cite{V98}.

A graph is called a $k$-quasiplanar unit distance graph if it can be drawn in the plane with unit segments as edges such that there are no $k$ pairwise crossing edges. Let $v_k(n)$ be the maximum number of edges of such a graph on $n$ vertices. 
For any fixed $k$, a linear upper bound for $v_k(n)$ follows from a result of Suk \cite{S14}. 
Here we prove a much better linear upper bound and a similar lower bound. 

\begin{theorem}\label{quasi}
For the maximum number of edges of a $k$-quasiplanar unit distance graph on $n$ vertices, $v_k(n)$, we have:

\begin{enumerate}[label=(\roman*)] 
\item $v_k(n)< 4kn$,
\item $(k-1)n-o(n)\le v_k(n)$,
whenever $k=2^{O\left(\log n/\log\log n \right)}$.
\end{enumerate}
\end{theorem}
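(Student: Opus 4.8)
The two bounds are of very different natures; only part~(i) requires real work.

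\emph{Part (i).} Fix a drawing realising $v_k(n)$: all edges are unit segments and no $k$ of them pairwise cross. Since two distinct segments meet in at most one point, this is a \emph{simple topological graph}, so any proper colouring of its \emph{crossing graph} $H$ (the graph on $E(G)$ in which two edges are adjacent exactly when they cross) partitions $E(G)$ into colour classes that are crossing-free unit-distance graphs, i.e.\ matchstick graphs, each with fewer than $3n-\sqrt{12n-3}<3n$ edges by \cite{LS22}. As $G$ is $k$-quasiplanar, $H$ is $K_k$-free, so $\omega(H)\le k-1$. Hence it suffices to establish a linear $\chi$-bound for crossing graphs of unit-distance drawings of slope essentially $\tfrac43$, namely $\chi(H)\le\tfrac43\,\omega(H)+O(1)$: then $E(G)$ is covered by at most $\tfrac43(k-1)+O(1)$ matchstick graphs, and the inequality $v_k(n)<4kn$ follows. (A coarser colouring argument already gives $v_k(n)=O(kn)$, which is implied by Suk~\cite{S14}; the point is to get the constant $4$.)

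\emph{The geometric heart of part (i)} is the rigidity of unit segments, used in the following localised form: if two unit segments cross at a point $p$, then on each segment $p$ lies within distance $\tfrac12$ of one of its endpoints, so two of the four endpoints (one from each segment) are at distance at most $1$. Consequently every crossing can be \emph{charged} to a vertex $w$ and takes place inside the disk $D(w,\tfrac12)$, where it is a crossing between an edge incident to $w$ (a ``radius'') and an edge passing through the disk (a ``chord''); inside $D(w,\tfrac12)$ the radii emanate from the centre in angular order, and a chord meets a contiguous block of them. The colouring would then be produced by an angular bookkeeping around each vertex together with the observation that many chords all crossing one common radius and pairwise crossing each other form a large pairwise-crossing family of $G$, which is forbidden. \textbf{This local analysis is the main obstacle:} an edge of a unit-distance graph can be crossed by arbitrarily many pairwise \emph{non}-crossing (nearly parallel) edges, so there is no useful per-edge crossing bound and no bound on the maximum degree; one is forced to argue vertex by vertex, and controlling the interaction between the radial fan at a vertex and the chords through its disk while extracting pairwise-crossing families is delicate. (This is also why a mere bound on the total number of crossings cannot work: in the construction below the number of crossings is of order $k^2n\,\ell$, far more than $kn$.)

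\emph{Part (ii).} For $k=2^{O(\log n/\log\log n)}$ I construct a $k$-quasiplanar unit-distance graph on about $n$ vertices with $(k-1)n-o(n)$ edges. Choose an integer $\ell$ such that $\ell^2$ admits at least $k-1$ representations $\ell^2=a^2+b^2$ giving distinct directions; taking $\ell^2$ to be a product of the first few primes congruent to $1\bmod 4$ and applying the prime number theorem, one may take $\ell=2^{O(\log k\,\log\log k)}$, which is $o(\sqrt n)$ exactly under the stated hypothesis on $k$ (with a small enough constant). Let $V$ be the $\lfloor\sqrt n\rfloor\times\lfloor\sqrt n\rfloor$ integer grid scaled by $1/\ell$, so $|V|\ge n-o(n)$, and fix $k-1$ of the available vectors $v_1,\dots,v_{k-1}$ of Euclidean length $1$ after scaling; each $v_i$ has integer coordinates of absolute value at most $\ell=o(\sqrt n)$ before scaling. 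Let $G$ join every pair of points of $V$ that differ by some $v_i$. Every edge is a unit segment, each direction $v_i$ contributes $(\sqrt n-O(\ell))^2=n-o(n)$ edges, so $G$ has $(k-1)n-o(n)$ edges; and since parallel segments do not cross, any pairwise-crossing family of edges contains at most one edge per direction, hence has size at most $k-1$, so $G$ is $k$-quasiplanar. Passing to an arbitrary $n$-element subset of $V$ changes the edge count by only $o(n)$. The single quantitative point requiring care is the estimate $\ell=2^{O(\log k\,\log\log k)}$ on the least suitable $\ell$, which is precisely the number-theoretic estimate underlying Erd\H{o}s's lower bound for the unit-distance problem and which dictates the admissible range of $k$.
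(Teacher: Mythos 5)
Your part~(ii) is essentially the paper's own construction (Erd\H{o}s's grid \cite{E46}, restricted to $k-1$ of the available lattice directions of a fixed length $\sqrt r$, with the observation that a pairwise crossing family contains at most one edge per direction), and apart from the slip of calling $\ell$ an integer while taking $\ell^2$ square-free, it is fine at the same level of precision as the paper, including the caveat about the constant in $k=2^{O(\log n/\log\log n)}$.

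Part~(i), however, has a genuine gap, and you name it yourself: the entire bound rests on the unproven claim that the crossing graph $H$ of a unit-distance drawing satisfies $\chi(H)\le\tfrac43\,\omega(H)+O(1)$, so that the edge set decomposes into roughly $\tfrac43 k$ matchstick graphs of at most $3n$ edges each. Nothing in your ``angular bookkeeping'' sketch establishes such a $\chi$-bound, and it is far from clear that a bound with multiplicative constant $\tfrac43$ (or indeed any bound better than what already follows from Suk \cite{S14}) is attainable this way; as you note, an edge can be crossed by arbitrarily many pairwise non-crossing edges and degrees are unbounded, so the local charging around a vertex does not obviously terminate in a proper colouring with the required number of colours. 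Without that lemma your argument only reproduces $v_k(n)=O(kn)$ with an unspecified constant, not $v_k(n)<4kn$. The paper avoids colouring altogether: after keeping only the edges of positive slope and replacing each by a left-directed and a right-directed copy, it defines a ``continuation'' operation (the left edge immediately below at the right endpoint, resp.\ the right edge immediately above at the left endpoint) whose iteration partitions the directed edges into \emph{blocks}. Nested-rectangle monotonicity shows each block is a directed path whose odd-indexed edges pairwise cross, so $k$-quasiplanarity caps a block at $2k-2$ edges; and since a block can end at a vertex only as the lowest right edge or the highest left edge there, at most $2n$ blocks exist, giving $|E|\le|E_2|<4kn$ directly. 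If you want to salvage your route, you would need to actually prove a linear $\chi$-bound with the right constant for these crossing graphs, which is a substantially harder problem than the statement itself; otherwise the block-decomposition argument is the missing idea.
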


\smallskip

\noindent {\bf Paper outline.}
In \Cref{sec2}, we study $1$-planar unit distance graphs, and 
prove \Cref{1-planar}. In \Cref{sec3}, $k$-planar unit distance graphs are considered, and we prove \Cref{rote} and \Cref{k-planar}. In \Cref{sec4}, we examine $k$-quasiplanar graphs, and we prove \Cref{quasi}. Finally, in \Cref{open_questions}, we state some interesting open problems.

\section{1-planar unit distance graphs} \label{sec2}
\begin{proof}[\bf Proof of \Cref{1-planar}.]
The lower bound follows directly from Harborth's lower bound for matchstick graphs \cite{Harborth74}. We prove the upper bound. Let $G$ be a $1$-planar unit distance graph with $n$ vertices and consider a $1$-plane unit distance drawing of $G$. Let $E$ be the set of edges, $|E|=e$. Let $G_0$ be a plane subgraph of $G$ with maximum number of edges, and among those one with the minimum number of triangular  faces. Let $E_0\subset E$ denote the set of edges of $G_0$ and $E_1=E\setminus E_0$ denote the set of remaining edges, $|E_0|=e_0$, $|E_1|=e_1$. Let $f$ be the number of faces of $G_0$, including the unbounded face and let $\Phi_1, \, \Phi_2 \, \dots, \, \Phi_f$ be the faces of $G_0$. For any face $\Phi_i$, $|\Phi_i|$ is the number of its bounding edges, counted with multiplicity. That is, if an edge bounds $\Phi_i$ from both sides, then it is counted twice. Due to the maximality of $G_0$, every edge $\alpha\in E_1$ crosses an edge in $E_0$ and connects two vertices that belong to neighboring faces of $G_0$.
Therefore, we can partition every edge $\alpha\in E_1$ into two {\em halfedges} at the unique crossing point on $\alpha$. 
Each halfedge is contained in a face $\Phi$, one of its endpoints is a vertex of $\Phi$ the other endpoint is an interior point of a bounding edge. See \Cref{halfedges}. In the rest of the proof, our main goal is to count the number of halfedges for each face and argue that there are not too many of them.

\begin{figure}[ht]
\centering
\includegraphics[width=6.3cm]{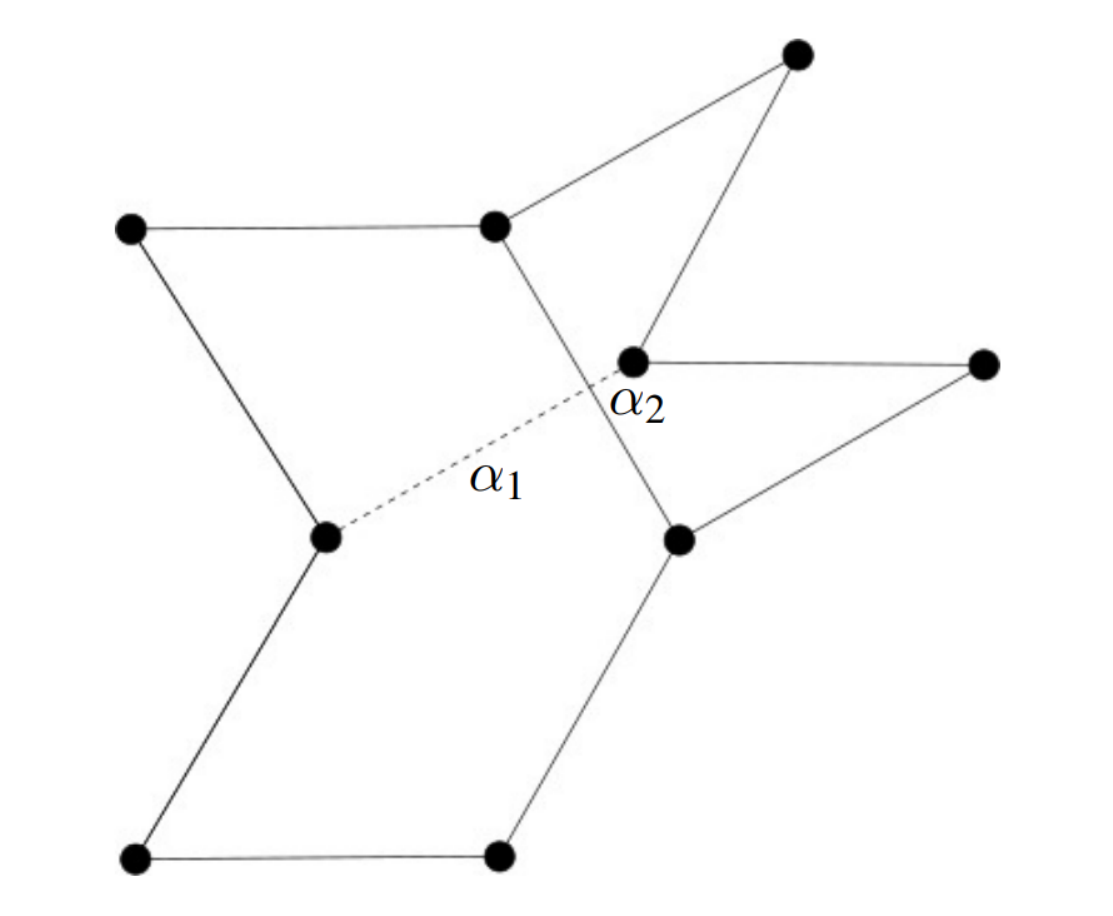}
\caption{An edge in $E_1$ -- drawn with dashed lines -- can be partitioned into two halfedges: $\alpha_1$ and $\alpha_2$.}
\label{halfedges}
\end{figure}

\begin{claim} \label{claim:haromszog}
A triangular face of $G_0$ that does not contain isolated vertices, does not contain any halfedges.
\end{claim}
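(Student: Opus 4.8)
The plan is to argue by contradiction: suppose some triangular face $\Phi=xyz$ of $G_0$ with no isolated vertex in its interior contains a halfedge $\alpha_1$. By definition, $\alpha_1$ has one endpoint at a vertex of $\Phi$ — say $x$ — and its other endpoint is an interior point $p$ of one of the bounding edges, which must be the opposite side $yz$ (it cannot lie on $xy$ or $xz$, since a halfedge of a unit-length edge has length strictly less than $1$, while $\alpha_1$ together with a sub-segment of $xy$ or $xz$ would have to form a degenerate triangle with two collinear unit-ish pieces — more carefully, $\alpha_1$ and the side it emanates along cannot overlap because the full edge $\alpha\in E_1$ would then be collinear with a $G_0$-edge and could not cross it transversally). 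So $\alpha_1$ is a segment from $x$ crossing the opposite side $yz$ at an interior point $p$, and $|xp|<1$.

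Next I would extract geometric information from the fact that $\Phi$ is bounded by three unit segments: $xyz$ is a unit equilateral triangle. The distance from a vertex of a unit equilateral triangle to any point of the opposite side is at least $\sqrt{3}/2$, attained only at the foot of the altitude; and it is at most $1$, attained only at the two far endpoints $y,z$, which are vertices, not interior points. Hence $\sqrt{3}/2 \le |xp| < 1$, so in particular $p$ lies strictly inside the side $yz$ and $\alpha_1$ is a genuine crossing halfedge — fine so far, no contradiction yet from $\Phi$ alone. The real contradiction must come from the global edge $\alpha = \alpha_1\cup\alpha_2$: its second half $\alpha_2$ lies in the neighboring face across $yz$, and $\alpha$ has length exactly $1$, so $|\alpha_2| = 1 - |xp| \le 1 - \sqrt 3/2 < 0.14$.

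The heart of the argument, then, is that such a short halfedge cannot actually be completed to a unit-distance edge of $G$ while keeping the drawing $1$-planar and keeping $G_0$ maximum. I would exploit the minimality clause in the choice of $G_0$ (maximum edges, then minimum number of triangular faces). The idea: the full edge $\alpha$ connects $x$ to a vertex $w$ with $|xw|=1$, where $w$ lies just across $yz$, at distance less than $0.14$ from the point $p$ on segment $yz$. I would show that $w$ must in fact be very close to $yz$, and then that the edge $yw$ or $zw$ (whichever is relevant) together with parts of the configuration lets us swap $\alpha$ into $G_0$: since $\alpha$ crosses only $yz$, we could delete $yz$ from $G_0$ and add $\alpha$, obtaining another plane subgraph with the same number of edges but — and this is the point to verify — fewer triangular faces, because removing the edge $yz$ destroys the triangular face $\Phi$ while the new face created around $\alpha$ is larger. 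This contradicts the choice of $G_0$. The main obstacle I anticipate is making this swap rigorous: one must check that $\alpha$ does not cross any \emph{other} $G_0$-edge (true, since in the $1$-planar drawing $\alpha\in E_1$ has a unique crossing, with $yz$), that deleting $yz$ and inserting $\alpha$ really yields a plane graph, and — the delicate part — that the resulting face count of triangles strictly decreases rather than staying the same; this is where the hypothesis that $\Phi$ contains no isolated vertex is used, to rule out the degenerate possibility that the new configuration reintroduces a triangle. If the face-count bookkeeping refuses to cooperate directly, the fallback is a purely metric argument: show that a point $w$ with $|xw|=1$ and $w$ within distance $<1-\sqrt3/2$ of the opposite side $yz$ of the unit equilateral triangle $xyz$ simply cannot exist on the correct side, i.e. the region $\{w : |xw|=1\}\cap\{\text{dist}(w,yz)<1-\sqrt3/2,\ w\text{ across }yz\}$ is empty — a short computation with the circle of radius $1$ about $x$ — which would give the contradiction with no appeal to the extremal choice of $G_0$ at all.
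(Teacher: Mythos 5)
Your main plan is the same as the paper's: perform the edge flip (delete $yz$, insert $\alpha=xw$, which is legal since $yz$ is the unique edge crossed by $\alpha$), keep the number of edges, and contradict the secondary minimality of $G_0$ (fewest triangular faces among maximum plane subgraphs). But you stop exactly at the decisive step: you only \emph{assert} that the number of triangular faces drops, flag it as "the delicate part", and do not prove that the flip cannot create a new triangular face. That is the actual content of the claim, and it does not come for free from the no-isolated-vertex hypothesis, which you invoke for the wrong purpose: that hypothesis is only needed to guarantee that the halfedge emanates from one of the corners $x,y,z$ (rather than from an isolated vertex lying inside the face), not to rule out new triangles. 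The paper closes the gap with a short unit-distance argument: a new triangular face would have $\alpha$ as a side together with $xy$ or $xz$ (say $xy$), so $x,y,w$ would span a unit equilateral triangle; if it lies on the same side of $xy$ as $z$ then $w=z$, impossible since $xw$ crosses $yz$, and if it lies on the opposite side then $xw$ cannot cross $yz$ at all. Without some argument of this kind your proof is incomplete.

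Worse, the fallback you offer in case the face-count bookkeeping fails is mathematically false. With $x=(0,\sqrt3/2)$, $y=(-1/2,0)$, $z=(1/2,0)$, the unit circle about $x$ contains a whole arc strictly below the line $yz$ (its lowest point is $(0,\sqrt3/2-1)$), and every point $w$ of that open arc other than the lowest one satisfies $|xw|=1$, lies across $yz$, is at distance strictly less than $1-\sqrt3/2$ from $yz$, and the segment $xw$ crosses the interior of $yz$. So the region you claim is empty is not empty, and no purely metric argument inside the triangle can prove the claim: the forbidden configuration is perfectly realizable geometrically (this is precisely the configuration the flip removes), and the extremal choice of $G_0$ is genuinely needed. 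Your preliminary observations (the halfedge must cross the opposite side $yz$, the flip preserves planarity and the edge count) are fine, but as it stands the proposal has a genuine gap at its core.
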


\begin{proof}
Let $\Phi=uvw$ be a triangular face of $G_0$ with no isolated vertices. It is easy to see that $G_0$ can contain at most one halfedge by 1-planarity. Suppose that $G_0$ contains a halfedge $\alpha_1$ that is part of the edge $\alpha=ux$. Then $\alpha$ crosses the edge $vw$. Replace the edge $vw$ by $\alpha$ in $G_0$ (see \Cref{triangle}). Since $vw$ is the only edge of $G$ that crosses $\alpha$, we obtain another plane subgraph of $G$. It has the same number of edges.

\begin{figure}[htp]
\centering
\includegraphics[width=11cm]{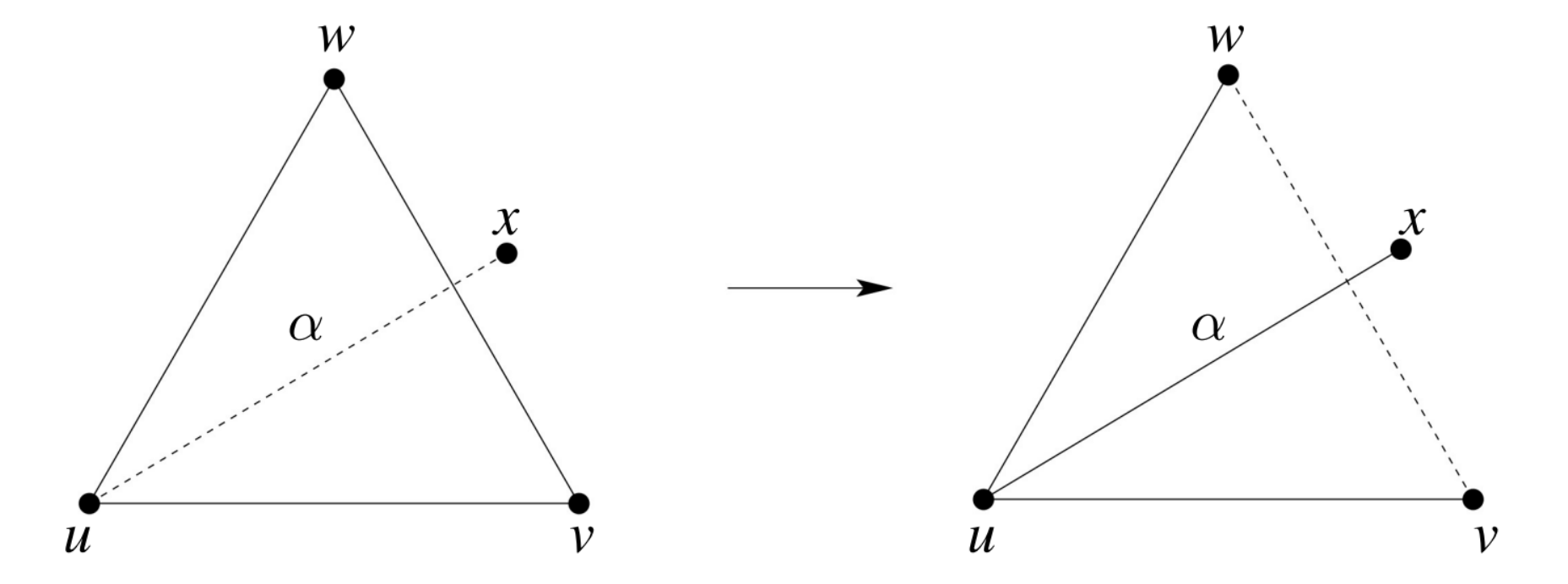}
\caption{If a triangular face with no isolated vertices contains a halfedge, then the number of triangles in $G_0$ can be reduced by an `edge flip'.}
\label{triangle}
\end{figure}

We claim that it has fewer triangular faces. The triangular face $\Phi$ disappeared. Suppose that we have created a new triangular face. 
Then $\alpha$ should be a side of it. Then either $uv$ or $uw$ is also a side, suppose without loss of generality that it is $uv$. But then $uvx$ is also a unit equilateral triangle. If the two equilateral triangles $uvw$ and $uvx$ are on the same side of $uv$, then $x=w$, which is a contradiction. If they are on opposite sides then $vw$ and $ux$ cannot cross each other, which is also a contradiction.
\end{proof}

Assign $1/2$  weight to each halfedge. 
For any face $\Phi$, let $s(\Phi)$ be the sum of the weights of its halfedges. Clearly, we have $$\sum_{i=1}^fs(\Phi_i)=|E_1|.$$
For any face $\Phi$ of $G_0$,  let $t(\Phi)$ denote the number of additional edges needed to triangulate the face $\Phi$. A straightforward consequence of Euler's formula is the following statement. If the boundary of $\Phi$ has $m$ connected components then
\begin{equation} \label{Euler_consequence}
t(\Phi)=|\Phi|+3m-6. 
\end{equation}

\begin{claim} \label{claim:s(f)}
Let $\Phi$ be a face of $G_0$. Then
\begin{enumerate}[label=(\roman*)] 
\item if $|\Phi| < 5$, then we have $s(\Phi)\le t(\Phi)$,
\item if $|\Phi|\ge 5$, then we have
$s(\Phi)\le t(\Phi)-|\Phi|/10.$
\end{enumerate}
\end{claim}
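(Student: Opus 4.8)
The plan is to fix a face $\Phi$ of $G_0$ and carefully count its halfedges, grouping them by which bounding edge of $\Phi$ they cross and which vertex of $\Phi$ they emanate from. A halfedge inside $\Phi$ has one endpoint a vertex $u$ of $\Phi$ and the other an interior point of a bounding edge $e$ of $\Phi$; since the whole edge of $E_1$ it belongs to is a unit segment, $u$ lies at unit distance from that interior point, so in particular $u$ lies within distance $1$ of $e$. The first step is the local geometry: I would show that for a fixed bounding edge $e$ of $\Phi$, only boundedly many halfedges of $\Phi$ can cross $e$. Indeed, by $1$-planarity $e$ is crossed at most once by an edge of $E_1$, so in fact at most one halfedge of $\Phi$ crosses any given bounding edge — wait, more carefully, each bounding \emph{side} contributes at most one crossing, and an edge counted twice on the boundary gives at most two. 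So $s(\Phi) \le \tfrac12|\Phi|$ already. This cheap bound will handle part (i): for $|\Phi| < 5$, i.e. $|\Phi| \in \{1,2,3,4\}$ (with $m=1$), compare $\tfrac12|\Phi|$ to $t(\Phi) = |\Phi| - 3$; one checks $|\Phi|=4$ gives $s(\Phi)\le 2 = t(\Phi)$, and the degenerate small cases ($|\Phi|\le 3$) must be treated separately, invoking \Cref{claim:haromszog} for triangles and noting that faces with $|\Phi|\le 2$ cannot host a halfedge at all (no room for a unit segment from a vertex to the opposite side) or, if they contain isolated vertices, require the more general form of \eqref{Euler_consequence} with $m>1$.

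For part (ii), the bound $s(\Phi)\le \tfrac12|\Phi|$ is not enough against $t(\Phi) - |\Phi|/10 = |\Phi|+3m-6-|\Phi|/10$, since $\tfrac12|\Phi|$ can exceed $\tfrac{9}{10}|\Phi| - 6$ only when $|\Phi|$ is small — actually $\tfrac12|\Phi| \le \tfrac9{10}|\Phi|-6$ fails for $|\Phi|<15$, so for $|\Phi|\ge 15$ the trivial bound already suffices and the real work is the window $5\le|\Phi|\le 14$. The plan for this range is a sharper accounting: a halfedge from vertex $u$ crossing side $e$ forces $u$ to be close to $e$, and two consecutive sides of $\Phi$ meeting at $u$ at an interior angle that is not too small cannot \emph{both} be crossed by halfedges emanating from nearby vertices without creating a short edge or a forbidden second crossing. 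I would argue that among any three consecutive bounding edges, not all three can carry halfedges, giving $s(\Phi)\le \tfrac13|\Phi|$, which beats $\tfrac9{10}|\Phi|-6$ for $|\Phi|\ge \lceil 6/(9/10-1/3)\rceil = 11$; the remaining values $|\Phi|\in\{5,\dots,10\}$ I would dispatch by hand, using that a halfedge of length $<1$ spanning from a vertex to a non-adjacent side of a face with few edges is geometrically very constrained.

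The main obstacle I expect is exactly the finite set of small cases $5\le |\Phi|\le 10$ (and the $m>1$ variants), where neither the $\tfrac12|\Phi|$ nor the $\tfrac13|\Phi|$ bound is automatically strong enough and one must use the unit-distance constraint in earnest: that each halfedge is \emph{half} of a genuine unit segment, so the vertex it comes from is at distance exactly $1$ from a point strictly inside a bounding edge, and that the other half of that segment lies in the neighboring face. Pinning down, for each small value of $|\Phi|$, that the number of such configurations is at most $t(\Phi) - |\Phi|/10$ — equivalently, ruling out the extremal near-misses via angle and length estimates inside a bounded polygon — is the delicate, case-heavy part; the rest of the argument is the routine Euler bookkeeping of \eqref{Euler_consequence} together with \Cref{claim:haromszog}.
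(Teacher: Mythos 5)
There are two genuine gaps. First, your quadrilateral case rests on an arithmetic slip: for $|\Phi|=4$ with connected boundary, \eqref{Euler_consequence} gives $t(\Phi)=|\Phi|-3=1$, not $2$, so the trivial bound $s(\Phi)\le \tfrac12|\Phi|=2$ does \emph{not} prove (i). You genuinely need to show that a rhombus face contains at most two halfedges (the paper does this by noting that three pairwise noncrossing halfedges would have to end on three different sides, forcing one of them to end on a side incident to its own vertex endpoint, which is impossible since edges are straight segments and cannot overlap). This is not a cosmetic point: in \Cref{claim:f>=5} the bound $s(\Phi)\le t(\Phi)$ is summed over the $f_4\ge n/4$ quadrilateral faces, so losing even an additive $1$ per quadrilateral destroys the final estimate.

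Second, for (ii) your plan hinges on the unproven geometric assertion that among any three consecutive bounding edges not all three can be crossed by halfedges, plus a finite case analysis (your window $5\le|\Phi|\le 10$, which with the correct value $t(\Phi)=|\Phi|-3$ is really $5\le|\Phi|\le 7$) that you do not carry out; you yourself flag this as the delicate part. The paper avoids all metric case analysis here with a purely combinatorial observation you are missing: order the halfedges of $\Phi$ by how they split the boundary and extract two \emph{minimal} halfedges $\alpha,\beta$ (a halfedge is minimal if one of the two parts it cuts off contains no other halfedge). Each minimal halfedge cuts off a part containing at least one bounding edge of $\Phi$ that is crossed by no halfedge, so at least two bounding edges are uncrossed, giving $s(\Phi)\le(|\Phi|-2)/2$; combined with $t(\Phi)=|\Phi|-3$ this yields $t(\Phi)\ge s(\Phi)+|\Phi|/10$ for every $|\Phi|\ge 5$ at once (and the disconnected case $m\ge 2$ is dispatched by $t(\Phi)\ge|\Phi|\ge s(\Phi)+|\Phi|/10$, as you essentially note). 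Without either your consecutive-edges claim being proved or the minimal-halfedge argument, part (ii) of your proposal is incomplete.
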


\begin{proof}
Observe that for any face $\Phi$, each of the $|\Phi|$ edges on the boundary of $\Phi$ is crossed by at most one halfedge, therefore,
$s(\Phi)\le |\Phi|/2$. Suppose first that the boundary of $\Phi$ is not connected, that is, $m\ge 2$. By (\ref{Euler_consequence}), $t(\Phi)\ge |\Phi|$, and by the observation above, we have $s(\Phi)\le |\Phi|/2$. Therefore, 
$$t(\Phi)\ge |\Phi|\ge |\Phi|/2+|\Phi|/10 \ge s(\Phi)+|\Phi|/10.$$ 
From now on, we can assume that the boundary of $\Phi$ is connected, that is, $m=1$.

If $|\Phi|=3$ then $\Phi$ is a triangle,
$t(\Phi)=0$ and by \Cref{claim:haromszog} $s(\Phi)=0$, so we are done. If $|\Phi|=4$, then $\Phi$ is a quadrilateral (actually, a rhombus), $t(\Phi)=1$. Suppose that it contains at least $3$ halfedges. 
By the $1$-planarity, they do not intersect each other and they end on different sides of $\Phi$. But then one of the halfedges would end on a side of $\Phi$ which is adjacent to its other endpoint, a vertex of $\Phi$. This is clearly impossible, consequently, 
$s(\Phi)\le 1=t(\Phi)$. A very similar, slightly more detailed argument can be found in \cite{PT97}. For completeness, \Cref{4gons} shows all possible cases when $\Phi$ has two halfedges. This finishes part (i).

\begin{figure}[htp]
\centering
\includegraphics[width=11cm]{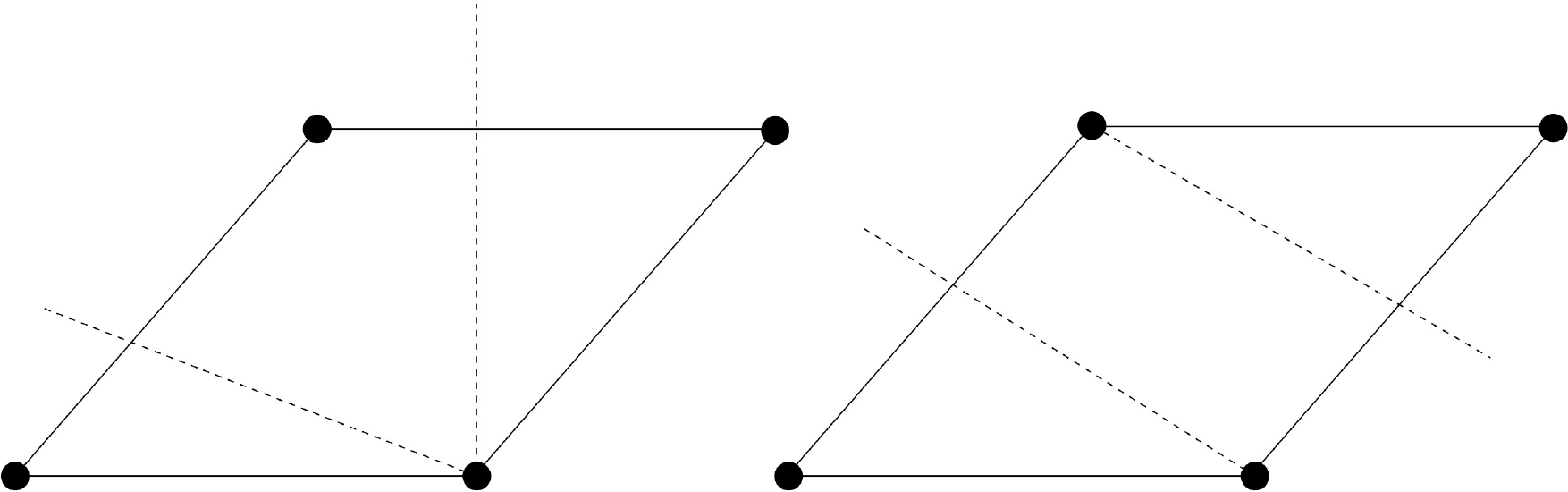}
\caption{A quadrilateral can have at most two halfedges.}
\label{4gons}
\end{figure}

For (ii), let $|\Phi|\ge 5$. We can assume that $\Phi$ has at least two halfedges, otherwise, we are done. 
A halfedge $\alpha$ in $\Phi$ divides $\Phi$ into two parts. Let $a(\alpha)$ and $b(\alpha)$ be the number of vertices of $\Phi$ in the two parts. If a vertex appears on the boundary more than once, then it is counted with multiplicity. Since the halfedges in $\Phi$ do not cross each other, all other halfedges are entirely in one of these two parts. If one part does not contain any halfedges, then $\alpha$ is called a {\em minimal halfedge}. Let $\alpha$ be a 
halfedge for which $M=\min \{ a(\alpha), \, b(\alpha) \}$ is minimal. 
Then there are $M$ vertices of $\Phi$ on one side of $\alpha$. 
Clearly, this part cannot contain any halfedge, so $\alpha$ is minimal. 
Now, for any other halfedge $\beta\neq\alpha$, let $c(\beta)$ be  the number of vertices of $\Phi$ on the side of $\beta$ not containing $\alpha$. 
Take a halfedge $\beta$ for which $c(\beta)$ is minimal. Then $\beta$ is also a minimal halfedge. So, we can conclude that there at least two minimal halfedges in $\Phi$, say, $\alpha$ and $\beta$. 

Then $\alpha$ and $\beta$ together partition $\Phi$ into three parts, two parts contain no other halfedges but both parts contain an edge of $\Phi$. So, at most $|\Phi|-2$ edges of $\Phi$ are crossed by a halfedge, therefore, there are at most $|\Phi|-2$ halfedges in $\Phi$, consequently $s(\Phi)\le (|\Phi|-2)/2$. See \Cref{minimal}.

On the other hand, $t(\Phi)=|\Phi|-3$. Since $|\Phi|\ge 5$, we have
$$t(\Phi)=|\Phi|-3\ge (|\Phi|-2)/2+|\Phi|/10 \ge s(\Phi)+|\Phi|/10.$$ 

This concludes the proof of the Claim. 
\end{proof}

\begin{figure}[t]
\centering
\includegraphics[width=8.5cm]{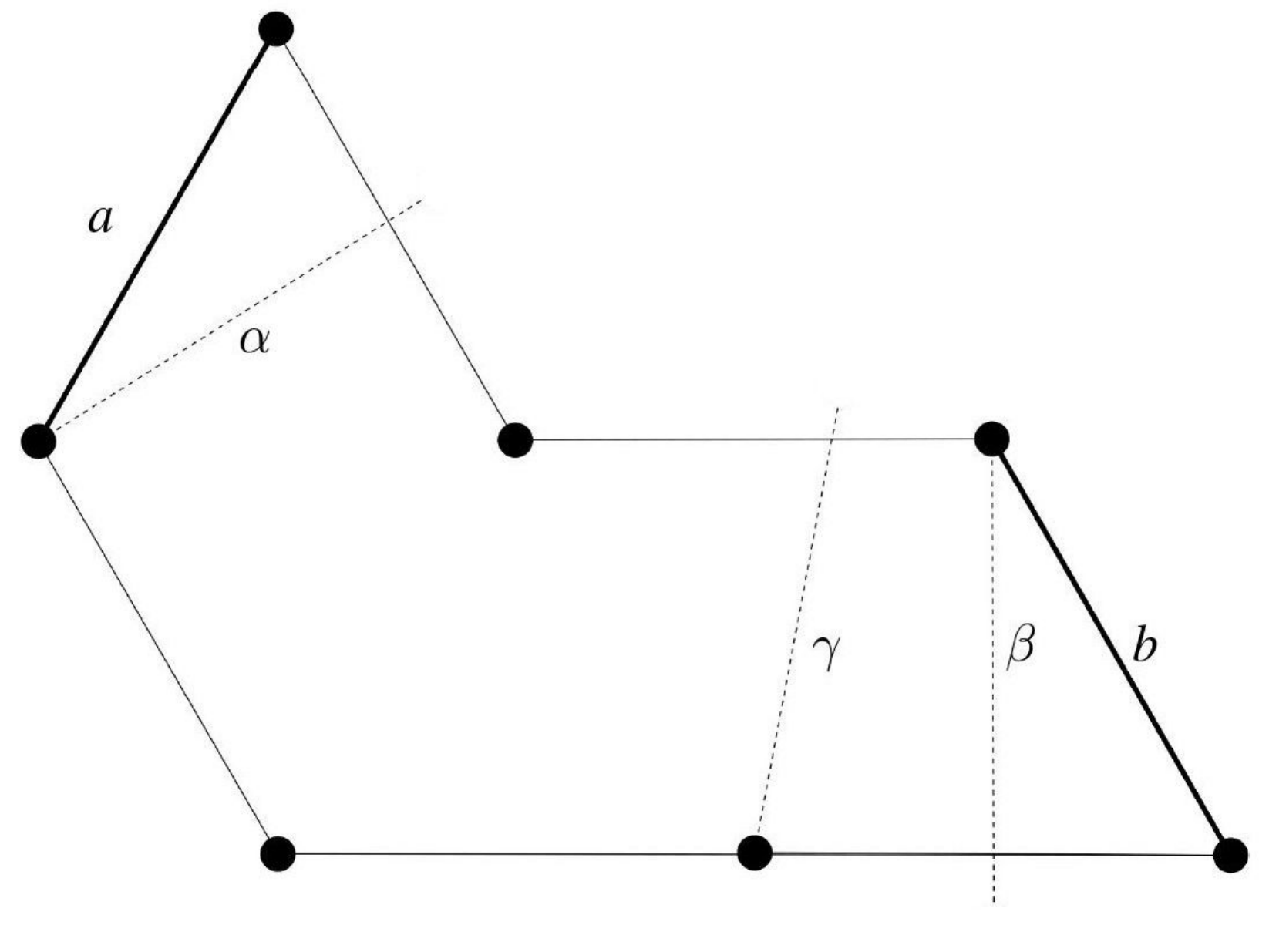}
\caption{Halfedges $\alpha$ and $\beta$ are minimal, edges $a$ and $b$ are uncrossed.}
\label{minimal}
\end{figure}

Return to the proof of \Cref{1-planar}. For $i\ge 3$, let $f_i$ denote the number of faces $\Phi$ of $G_0$ with $|\Phi|=i$. By definition, $\sum_{i=3}^{\infty}f_i=f$ and $\sum_{i=3}^{\infty}if_i=2e_0$. Let $F_{\ge 5}=\sum_{i=5}^{\infty}if_i$. By the maximality of $G_0$, every edge in $E_1$ crosses an edge in $E_0$, and by $1$-planarity, every edge in $E_0$ is crossed by at most one edge in $E_1$. Consequently, $|E_0|=e_0\ge |E_1|=e_1$.

If $e_0\le n$, then $e=e_0+e_1\le 2e_0\le 2n<3n-\sqrt[4]{n}/15$, so we are done. Therefore, for the rest of the proof we can assume that $e_0\ge n$. It follows that 
\begin{align}\label{eq_2}
    3f_3+4f_4+F_{\ge 5}=2e_0\ge 2n.
\end{align}

\begin{claim} \label{claim:f>=5}
Suppose that $F_{\ge 5}\ge p $. Then 
$e=e_0+e_1\le 3n-p/10$.
\end{claim}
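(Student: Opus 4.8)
The plan is to combine \Cref{claim:s(f)} with a global application of Euler's formula to the plane graph $G_0$. Recall that $e_1=\sum_{i=1}^{f}s(\Phi_i)$. The first step is to record that \emph{every} face satisfies $s(\Phi_i)\le t(\Phi_i)$: this is \Cref{claim:s(f)}(i) when $|\Phi_i|\le 4$ and the boundary of $\Phi_i$ is connected, it is a weakening of \Cref{claim:s(f)}(ii) when $|\Phi_i|\ge 5$, and it is a weakening of the conclusion obtained in the disconnected-boundary case treated at the start of the proof of \Cref{claim:s(f)}; moreover every face with $|\Phi_i|\ge 5$ satisfies the sharper bound $s(\Phi_i)\le t(\Phi_i)-|\Phi_i|/10$. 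Summing over all faces and using $F_{\ge 5}=\sum_{i:\,|\Phi_i|\ge 5}|\Phi_i|\ge p$ then gives
$$e_1=\sum_{i=1}^{f}s(\Phi_i)\le \sum_{i=1}^{f}t(\Phi_i)-\frac{1}{10}\sum_{i:\,|\Phi_i|\ge 5}|\Phi_i|=\sum_{i=1}^{f}t(\Phi_i)-\frac{F_{\ge 5}}{10}\le \sum_{i=1}^{f}t(\Phi_i)-\frac{p}{10}.$$

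The second step is to evaluate $\sum_{i=1}^{f}t(\Phi_i)$ exactly. Summing (\ref{Euler_consequence}) over all faces, $\sum_i t(\Phi_i)=\sum_i|\Phi_i|+3\sum_i m_i-6f=2e_0+3\sum_i m_i-6f$. Let $c$ be the number of connected components of $G_0$, with isolated vertices counted as components. A short induction on the number of edges (or a direct count of face/boundary-component incidences) gives $\sum_i m_i=e_0-n+2c$, and Euler's formula for a plane graph with $c$ components gives $f=e_0-n+1+c$; substituting both, the $c$-terms cancel and $\sum_i t(\Phi_i)=3n-6-e_0$. Equivalently: inserting $t(\Phi_i)$ chords into each face turns $G_0$ into a plane triangulation on $n$ vertices, which has exactly $3n-6$ edges.

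Combining the two displays,
$$e=e_0+e_1\le e_0+\Big(\sum_{i=1}^{f}t(\Phi_i)-\frac{p}{10}\Big)=e_0+(3n-6-e_0)-\frac{p}{10}=3n-6-\frac{p}{10}\le 3n-\frac{p}{10},$$
which proves the claim, indeed with a slack of $6$. I do not expect a genuine obstacle: the real work has already been done in \Cref{claim:haromszog} and \Cref{claim:s(f)}. The only point needing care is the bookkeeping when $G_0$ is disconnected or has faces incident to isolated vertices: one must apply both (\ref{Euler_consequence}) and \Cref{claim:s(f)} with $m$ counting \emph{all} boundary components (isolated vertices included), and it is precisely the disconnected-boundary faces with $|\Phi_i|\le 4$ that are covered by the first part of the proof of \Cref{claim:s(f)}. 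We may also assume $n$ is large enough that $n\ge 3$ and the triangulation count is valid, since for small $n$ the bound of \Cref{1-planar} is trivial.
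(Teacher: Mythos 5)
Your proof is correct and takes essentially the same route as the paper: apply \Cref{claim:s(f)} to every face, sum the bounds (keeping the extra $-|\Phi|/10$ for faces of size at least $5$), and use the identity $e_0+\sum_{\Phi}t(\Phi)=3n-6$. The only difference is that you verify this identity carefully via Euler's formula for possibly disconnected $G_0$, whereas the paper simply invokes the triangulation count $3n-6$; both justifications are fine.
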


\begin{proof}
By the previous observations,

\begin{align*}
e =e_0+e_1=e_0+\sum_{\substack{ \alpha \text{ is a} \\ \text{ halfedge} }} 1/2 &= e_0+\sum_{i=1}^f  s(\Phi_i)\\
&= e_0 + \sum_{|\Phi|=3}s(\Phi) + \sum_{|\Phi|=4}s(\Phi)+ \sum_{|\Phi|\ge 5}s(\Phi)\\
&\le
e_0 + \sum_{|\Phi|=3}t(\Phi) + \sum_{|\Phi|=4}t(\Phi) + \sum_{|\Phi|\ge 5}(t(\Phi)-|\Phi|/10)\\
&\le
e_0 + \sum_{\Phi}t(\Phi) - \sum_{|\Phi|\ge 5}|\Phi|/10\\
&\le 3n-6-F_{\ge 5}/10\\
&\leq  3n-p/10.
\end{align*}

We used that a triangulation on $n$ vertices has $3n-6$ edges, so 
$e_0 + \sum_{\Phi}t(\Phi)=3n-6$.

\end{proof}

\begin{claim} \label{claim:f3}
Suppose that $f_{3}\ge p$. Then 
$e=e_0+e_1\le 3n-\sqrt{p}/5$.
\end{claim}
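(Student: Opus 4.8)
The plan is to draw the entire saving from one source, the outer (unbounded) face $\Phi_{out}$ of $G_0$: a large number of triangular faces encloses a large area, forcing the boundary walk of $\Phi_{out}$ to be long, which in turn makes the deficit $t(\Phi_{out})-s(\Phi_{out})$ large. To set this up, I would first record the identity behind the proof of \Cref{claim:f>=5}. Since $e_1=\sum_\Phi s(\Phi)$, since $e_0+\sum_\Phi t(\Phi)=3n-6$, and since $s(\Phi)\le t(\Phi)$ for every face by \Cref{claim:s(f)}, isolating $\Phi_{out}$ gives
$$e=e_0+e_1\le 3n-6-\bigl(t(\Phi_{out})-s(\Phi_{out})\bigr).$$
In particular $e\le 3n-6$, so I may assume $p\ge 901$, as otherwise $3n-\sqrt p/5\ge 3n-6\ge e$ and there is nothing to prove.

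The geometric core is an isoperimetric estimate. Every triangular face of $G_0$ is a unit equilateral triangle, of area $\sqrt3/4$, and all triangular faces are bounded with the single possible exception of $\Phi_{out}$ itself (when $|\Phi_{out}|=3$). Hence the union $\mathcal R$ of the bounded faces of $G_0$ -- which is exactly the bounded set $\mathbb R^2\setminus\overline{\Phi_{out}}$ -- has area at least $(p-1)\sqrt3/4$. On the other hand $\partial\mathcal R$ is made up of unit-length edges of $G_0$, each lying on the boundary walk of $\Phi_{out}$, so its total length is at most $|\Phi_{out}|$. I would then invoke the isoperimetric inequality: $(p-1)\sqrt3/4\le\operatorname{area}(\mathcal R)\le|\Phi_{out}|^2/(4\pi)$, whence $|\Phi_{out}|\ge\sqrt{\pi\sqrt3\,(p-1)}$, which exceeds $2\sqrt p$ once $p\ge 901$.

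Finally I would cash this in. By \eqref{Euler_consequence}, $t(\Phi_{out})=|\Phi_{out}|+3m-6\ge|\Phi_{out}|-3$, where $m\ge 1$ is the number of boundary components of $\Phi_{out}$, while $s(\Phi_{out})\le|\Phi_{out}|/2$ by the observation opening the proof of \Cref{claim:s(f)}. Hence $t(\Phi_{out})-s(\Phi_{out})\ge|\Phi_{out}|/2-3\ge\sqrt p-3$, and for $p\ge 901$ this is larger than $\sqrt p/5$. Substituting into the displayed inequality gives $e<3n-\sqrt p/5$.

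The only genuine subtlety I anticipate is in making the isoperimetric step airtight, not in the idea itself. It is worth noting that the naive alternative -- charging each triangular face to a non-triangular neighbour -- fails, since a ``fat'' ($60^\circ$--$120^\circ$) rhombic face adjacent to a triangular face can still carry two halfedges and thus have vanishing deficit; routing everything through $\Phi_{out}$ avoids this. The region $\mathcal R$ need not be connected or simply connected (for instance when $G_0$ is disconnected), and an edge bounding $\Phi_{out}$ on both sides is counted twice in $|\Phi_{out}|$ but contributes nothing to the length of $\partial\mathcal R$; both facts only help -- for disjoint regions one adds the individual isoperimetric inequalities and uses $\sum L_i^2\le(\sum L_i)^2$, holes only decrease area, and doubled edges only decrease the relevant boundary length -- and the generous constant $1/5$ easily absorbs the additive $O(1)$ terms ($-6$, $3m-6$, the one possibly-unbounded triangle).
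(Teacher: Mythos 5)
Your proposal is correct and follows essentially the same route as the paper: lower-bound the area of the union of bounded faces by the $p$ unit triangles, apply the isoperimetric inequality to force $|\Phi_{out}|\gtrsim 2\sqrt{p}$, and then collect the entire saving as the deficit $t(\Phi_{out})-s(\Phi_{out})$ in the $\sum_\Phi s(\Phi)\le\sum_\Phi t(\Phi)$ accounting. The only (harmless) difference is that the paper invokes \Cref{claim:s(f)}(ii) for the unbounded face after assuming $|\Psi|\ge 5$, whereas you use the cruder bounds $t(\Phi_{out})\ge|\Phi_{out}|-3$ and $s(\Phi_{out})\le|\Phi_{out}|/2$ and absorb the additive constants by first disposing of small $p$ via $e\le 3n-6$.
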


\begin{proof}
We can assume that $\Psi$, the unbounded face of $G_0$ has at least 5 edges. If not, the statement holds trivially.
Since we have $p$ equilateral triangles in $G_0$, the union of all bounded faces, $R$, has area at least $ \sqrt{3}p/4$.
The isoperimetric inequality \cite{C93} states that if a polygon has perimeter $l$ and area $A$, then $l^2\ge 4\pi A$. It implies that $R$ has perimeter at least $\sqrt[4]{3}\sqrt{\pi p}>2\sqrt{p}$. That is $|\Psi|\ge 2\sqrt{p}$. Therefore,
\begin{align*}
    e=e_0+e_1=e_0+ \sum_{\substack{ \alpha \text{ is a} \\ \text{ halfedge} }} 1/2 &=e_0+\sum_{i=1}^f s(\Phi_i)\\
    &= e_0 + \sum_{\Phi\neq \Psi}s(\Phi) + s(\Psi)\\
    &\le e_0 + \sum_{\Phi\neq \Psi}t(\Phi) + t(\Psi) - |\Psi|/10\\
    &= 3n-6 - |\Psi|/10\\
    &\le 3n-6-\sqrt{p}/5.
\end{align*}
\end{proof}

We can assume that $n \geq 5$, otherwise, \Cref{1-planar} holds trivially. If $F_{\ge 5}\ge n/2$, then by \Cref{claim:f>=5}, $e\le 3n-n/20\le 3n-\sqrt[4]{n}/15$ and we are done. If  $f_{3}\ge n/9$, then by \Cref{claim:f3}, 
$e\le 3n-\sqrt{n}/15 \le 3n-\sqrt[4]{n}/15$ and we are done again. So, we can assume that 
$F_{\ge 5}\le n/2$, $f_{3}\le n/9$. Since $3f_3+4f_4+F_{\ge 5} = 2e_0\ge 2n$ (cf.\ \eqref{eq_2}), it follows that $f_4\ge n/4$. 

\smallskip

Suppose without loss of generality that none of the edges of $G$ are vertical. Otherwise, apply a rotation. Define an auxiliary graph $H$ as follows. The vertices represent the quadrilateral faces of $G_0$. Since all edges are of unit length, all these faces are rhombuses. Two vertices are connected by an edge if the corresponding rhombuses share a common edge. The edges of $H$ correspond to the edges of $G_0$ with a rhombus  face on both sides. For every edge in $H$ define its weight as the slope of the corresponding edge of $G_0$. A path in $H$, such that all of its edges have the same weight $w$, is called a {\em $w$-chain}, or briefly a {\em chain}. A chain corresponds to a sequence of rhombuses such that the consecutive pairs share a side and all these sides are parallel. A chain, with at least two vertices (rhombuses) is called {\em maximal} if it cannot be extended. 

With one-vertex chains we have to be more careful. Suppose that $v$ is a vertex of $H$, $R$ is the corresponding rhombus, and let $w_1$, $w_2$ be the slopes of its sides. The one-vertex chain $v$ is a {\em maximal $w_1$-chain} 
(resp.\ {\em maximal $w_1$-chain}) 
if it cannot be extended to a larger $w_1$-chain (resp.\ larger $w_2$-chain). 
Each vertex of $H$, that is, each rhombus face in $G_0$, is in exactly two maximal chains.
Now we prove an important property of rhombus chains. 
Similar ideas were used in \cite{KS92} and \cite{K93}.

\begin{claim} \label{claim:2chains}
The intersection of two chains is empty or forms a chain.
\end{claim}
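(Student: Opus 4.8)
The plan is to show that two distinct chains $C_1$ and $C_2$ in $H$ cannot share two or more ``maximal runs'' of consecutive vertices; once we know the intersection is a single interval of consecutive vertices along each of $C_1$ and $C_2$, it follows that the intersection is itself a path in $H$, i.e.\ a chain. The key geometric fact driving this is that a $w$-chain is a sequence of rhombuses glued along parallel sides of slope $w$, so the whole chain lies in a strip bounded by two lines of slope $w$, and the chain is monotone in the direction perpendicular to $w$. Thus membership of a rhombus $R$ in a given maximal $w$-chain is determined by which of the two line-of-slope-$w$ gridlines bound $R$ on that side; equivalently, the maximal $w$-chains partition the rhombus faces according to this ``$w$-coordinate.''

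\begin{proof}
Let $C_1$ and $C_2$ be two chains. If they share no vertex, we are done, so assume they have a common vertex. If $C_1$ and $C_2$ are chains of the same slope $w$, then every vertex of $C_1$ and every vertex of $C_2$ is a rhombus having two sides of slope $w$, and lying between the same pair of consecutive lines of slope $w$ that bound the common rhombus. Since each rhombus belongs to a \emph{unique} maximal $w$-chain (its $w$-chain is obtained by repeatedly crossing the side of slope $w$ opposite to the one just crossed), both $C_1$ and $C_2$ are sub-paths of this one maximal $w$-chain, hence $C_1\cap C_2$ is a sub-path of a path, i.e.\ a chain.

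Now suppose $C_1$ is a $w_1$-chain and $C_2$ is a $w_2$-chain with $w_1\neq w_2$. A rhombus lies in a $w_1$-chain along its pair of sides of slope $w_1$ and in a $w_2$-chain along its pair of sides of slope $w_2$; in particular every common vertex $R$ of $C_1$ and $C_2$ is a rhombus whose four sides have slopes $w_1,w_1,w_2,w_2$. Consider two common vertices $R$ and $R'$ of $C_1$ and $C_2$. Along $C_1$ (a $w_1$-chain) all rhombuses lie in the strip between the two lines of slope $w_1$ through $R$, and moving along $C_1$ one translates in a fixed direction; the same is true along $C_2$ with slope $w_2$. Hence $R$ and $R'$ both lie in the intersection of a strip of slope $w_1$ and a strip of slope $w_2$, which is a parallelogram; this parallelogram contains at most one unit rhombus with the prescribed side slopes (indeed $R$ is determined by the two bounding lines of slope $w_1$ and the two of slope $w_2$). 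Therefore $R=R'$: two chains of different slopes meet in at most one vertex, which trivially forms a chain. This completes the proof.
\end{proof}
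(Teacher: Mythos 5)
Your reduction (same-slope chains are sub-paths of a unique maximal chain; different-slope chains should meet in at most one vertex) is the right shape, and the same-slope case is handled correctly. The gap is in the different-slope case, and it is exactly the heart of the claim. Your key assertion -- that all rhombuses of a $w_1$-chain $C_1$ lie in the strip between the two lines of slope $w_1$ through $R$ -- is false under the definition used here: in a $w_1$-chain the \emph{shared} sides have slope $w_1$, so each step of the chain crosses a line of slope $w_1$; already the neighbour of $R$ in $C_1$ lies outside that strip, and the chain moves monotonically in the direction transverse to $w_1$, not along it. Moreover, the rhombuses of a $w_1$-chain need not be translates of one another (only their $w_1$-sides are forced; the other pair of slopes can change from rhombus to rhombus), so the chain is a wandering staircase that is contained in no strip of slope $w_1$, nor of any fixed slope. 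Consequently the step ``$R$ and $R'$ both lie in the intersection of a $w_1$-strip and a $w_2$-strip, which contains at most one such rhombus'' does not go through: $R'$ is indeed a translate of $R$ (that much is fine), but nothing in your argument forces it to sit between the same four lines, which is precisely what has to be proved.

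The statement you are after in this case -- two maximal chains of different slopes share at most one vertex -- is true, but it needs a global argument, not a local one about $R$. The paper proves it by assuming two common vertices with no common vertex strictly between them, applying an affine map sending the common rhombus $R$ to a unit square with $w_1$ horizontal and $w_2$ vertical, and then using that the polygonal line through the centers along the $w_1$-chain is $y$-monotone while the one along the $w_2$-chain is $x$-monotone; extending the latter by two horizontal halflines gives a simple bi-infinite curve that the first polygonal line would have to cross to get from one common rhombus to the other, a contradiction. Some such monotonicity-plus-Jordan-curve (or equivalent global) argument is needed; as written, your proof of the different-slope case does not establish the claim.
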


\begin{proof}
If the intersection is at most one vertex then the statement clearly holds. Suppose that $A$ and $B$ are chains with at least two common vertices and their intersection is not a chain.
Let $A=v_1, \, v_2, \, \dots, \, v_a$. We can assume without loss of generality that $v_1, \, v_a \in B$ but no other vertex of $A$ is in $B$. Otherwise, we can delete some vertices of $A$ to obtain this situation. Delete all vertices of $B$ that are not between $v_1$ and $v_a$. Now $B=u_1, \, u_2, \, \dots, \, u_b$ where $v_1=u_1$, $v_a=u_b$ and these are the only common points of $A$ and $B$. Let $R$ be the rhombus that represents $v_1=u_1$ in $G_0$. Its sides have slopes $w_1$ and $w_2$ such that $A$ is a $w_1$-chain, $B$ is a $w_2$-chain. Apply an affine transformation so that $R$ is a unit square, $w_1$ is the horizontal, $w_2$ is the vertical direction. Suppose that $Q$ is the rhombus that represents $v_a=u_b$. Then its sides also have slopes $w_1$ and $w_2$, so $Q$ is also an axis parallel unit square. Represent each vertex  $v_1, \, v_2, \, \dots, \, v_a, \,
u_1, \, u_2, \, \dots, \, u_b$ by the center of the corresponding rhombus. For simplicity, we call these points also 
$v_1, \, v_2, \, \dots, \, v_a, \, u_1, \, u_2, \, \dots, \, u_b$, respectively. Assume without loss of generality that 
the point $v_a=u_b$ has larger $x$ and $y$ coordinates than $v_1=u_1$. Connect the consecutive points in both chains by straight line segments. Since $A$ is a $w_1$-chain and $w_1$ is the horizontal direction, the polygonal chain $P_A=(v_1, \, v_2, \, \dots, \, v_a)$ is $y$-monotone, and similarly, the polygonal chain $P_B=(u_1, \, v_2, \, \dots, \, v_b)$ is $x$-monotone.
Let $l_1$ be the horizontal halfline from $v_1=u_1$, pointing to the left and let $l_2$ be the horizontal halfline from $v_a=u_b$, pointing to the right. The bi-infinite curve $l_1\cup P_B\cup l_2$ is simple, because $P_B$ is $x$-monotone. It divides the plane into two regions, $R_{\text{down}}$, which is below it and its complement, $R_{\text{up}}$, see \Cref{chains}.

\begin{figure}[htp]
\centering
\includegraphics[width=11cm]{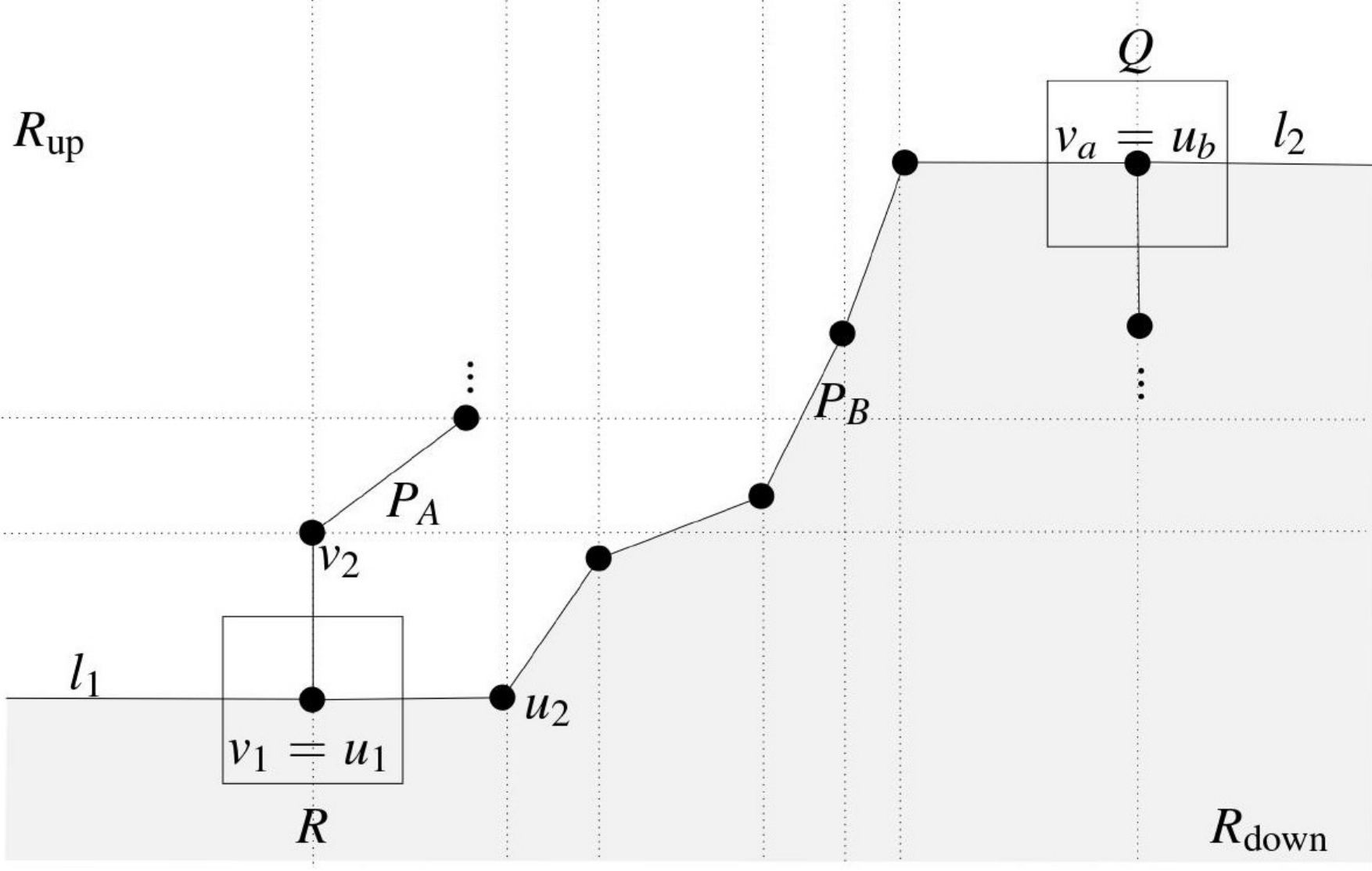}
\caption{The intersection of two chains is empty or forms a chain.}
\label{chains}
\end{figure}

Observe, that the initial part of $P_A$, near $v_1=u_1$ is in $R_{\text{up}}$ while the final part, near $v_a=u_b$ is in  $R_{\text{down}}$. On the other hand, $P_A$ does not intersect the boundary of $R_{\text{down}}$ and $R_{\text{up}}$, Indeed, it does not intersect $l_1$ and $l_2$ since it is $y$-monotone, and does not intersect $P_B$ by assumption. This is clearly a contradiction which proves the Claim.
\end{proof}

\begin{claim} \label{claim:manychains}
There are at least $\sqrt{n}/\sqrt{2}$ different maximal chains.
\end{claim}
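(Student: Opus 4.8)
The plan is to combine a simple double-counting argument with \Cref{claim:2chains}. First I would record two facts established just before the claim: the auxiliary graph $H$ has $f_4\ge n/4$ vertices, one per rhombus face of $G_0$, and every vertex of $H$ (i.e.\ every such rhombus) lies in exactly two maximal chains. Summing over all maximal chains the number of rhombuses each contains therefore counts every rhombus exactly twice, so if $t$ is the number of maximal chains, then the total is $2f_4\ge n/2$, and hence a largest maximal chain $C$ contains at least $2f_4/t\ge n/(2t)$ rhombuses.

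The second ingredient is that a long chain is ``crossed'' by many distinct other chains. Fix a largest maximal chain $C$ with rhombuses $R_1,\dots,R_m$, where $m\ge n/(2t)$; say $C$ is a $w$-chain (in the degenerate case $m=1$, take $w$ to be either of the two slopes of the single rhombus). Each $R_i$ has a second side-slope $w_i\neq w$ and hence lies in a maximal $w_i$-chain $C_i$; since $R_i$ belongs to exactly two maximal chains, $C_i\neq C$. The $C_i$ are also pairwise distinct: if $C_i=C_j$ for some $i\neq j$, then this chain contains both $R_i$ and $R_j$, so by \Cref{claim:2chains} the intersection $C\cap C_i$ is a chain on at least two vertices, which therefore has a single well-defined slope; but as a sub-chain of $C$ this slope is $w$, and as a sub-chain of $C_i$ it is $w_i$, a contradiction. (When $m=1$ there is nothing to check beyond $C_1\neq C$.) Thus, together with $C$ itself, we have exhibited at least $m+1$ distinct maximal chains, so $t\ge m+1>m\ge n/(2t)$.

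Rearranging $t>n/(2t)$ gives $2t^2>n$, that is, $t>\sqrt{n}/\sqrt{2}$, as claimed. The only substantive input is \Cref{claim:2chains} — everything hinges on the fact that two chains of different slopes meet in at most one rhombus, which is exactly what makes the $C_i$ distinct; the counting and the handling of one-vertex chains are routine once that is in place, so I expect no real obstacle here.
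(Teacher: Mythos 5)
Your proposal is correct and follows essentially the same route as the paper: double-count the rhombuses (each lies in exactly two maximal chains, giving total length $2f_4\ge n/2$), take a longest chain, and use \Cref{claim:2chains} to see that the second maximal chains through its rhombuses are pairwise distinct. The paper phrases this as a contradiction (fewer than $\sqrt{n}/\sqrt{2}$ chains would force one chain of length $\ge\sqrt{n}/\sqrt{2}$, yielding that many distinct chains), while you argue directly via $t>n/(2t)$; the content is identical.
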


\begin{proof}
For any vertex of $H$ (that is, for any rhombus face in $G_0$) there are exactly two maximal chains containing it. Therefore, the total length of all the maximal chains is $2f_4\ge n/2$. If there are less than $\sqrt{n}/\sqrt{2}$ different maximal chains, then one of them, say $C$, has length at least 
$\sqrt{n}/\sqrt{2}$. Through each of its vertices, there is another maximal chain and by \Cref{claim:2chains} all of these chains are different. 
\end{proof}

By \Cref{claim:manychains}, we have at least $\sqrt{n}/\sqrt{2}$ different maximal chains. Each of them has two ending edges, which bounds a face of size different than $4$. All of these bounding edges are different, therefore, $3f_3+F_{\ge 5}\ge \sqrt{2}\sqrt{n}$, which implies that either 
$3f_3 \ge \sqrt{n}/\sqrt{2}$ or 
$F_{\ge 5}\ge \sqrt{n}/\sqrt{2}$.

In the first case, by \Cref {claim:f3} and using that $n\ge 5$, we have $e\le 3n-\sqrt[4]{n}/15$. In the second case, by \Cref{claim:f>=5}, we have $e\le 3n-\sqrt{n}/15$. This concludes the proof of \Cref{1-planar}.
\end{proof}

\begin{remark} We did not attempt to optimize the coefficient of the term 
$\sqrt[4]{n}$ in \Cref{1-planar}. A slightly more careful calculation
gives $u_1(n) \leq 3n-\sqrt[4]{n}/10$ and it can be further improved.
\end{remark}

\section{{\em k}-planar unit distance graphs} \label{sec3}

\begin{proof}[\bf Proof of \Cref{rote}.]
Suppose that $n, \, k >100$.  The following is a well-known result in number theory, see \cite{PA11, M13}. For any $m$, there is an $r<m$ such that $r$ can be written as $a^2+b^2$ in  $2^{\Omega(\log m/\log\log m)}$ different ways where $a$ and $b$ are integers. For any fixed $m$, let $r$ be the product of the first $l$ primes congruent to $1$ mod $4$, such that $l$ is maximal with the property that $r<m$. This $r$ satisfies the requirements. 

Erd\H os \cite{E46} used this observation  to construct a set of $n$ points that determine $n \cdot 2^{\Omega(\log n/\log\log n)}$ unit distances. Clearly, $r$ is square-free, therefore, whenever $r=a^2+b^2$, we have $(a, \, b)=1$.

By applying the above result for $m=\sqrt{k}/5$, we obtain an $r<\sqrt{k}/5$ that can be written as the sum of two integer squares, $r=a^2+b^2$ in
$2^{\Omega(\log m/\log\log m)}= 2^{\Omega(\log k/\log\log k)}$ different ways. Take a 
$\lfloor\sqrt{n}\rfloor\times\lfloor\sqrt{n}\rfloor$ unit square grid (plus some isolated points far away, to have $n$ vertices) 
and connect two grid points by a straight line segment if they are at distance $\sqrt{r}$. Observe that no edge contains a vertex in its interior, thus it defines a geometric graph $G=(V, \, E)$. Almost every vertex of $G$ has degree $2^{\Omega(\log k/\log\log k)}$, so it has $n \cdot 2^{\Omega(\log k/\log\log k)}$ edges.

Let $uv \in E$ be an edge. Consider all vertices adjacent to an edge that crosses $uv$. All these vertices are at distance at most $\sqrt{r}$ from $uv$, see \cref{fig_thm3}. This region, the possible location of the endpoints of the crossing edges,  has area $(2+\pi)r$, so the number of vertices of $G$ in this region is less than $6r$. 
Each of these vertices have degree at most $4r$, 
since $r$ can be written as  $a^2+b^2$ in at most $r$ different ways.
So $uv$ is crossed by at most $24r^2<k$ edges. Scale the picture by a factor of $1/\sqrt{r}$ and we obtain a $k$-planar unit distance graph of $n$ vertices and $2^{\Omega(\log k/\log\log k)}$ edges.
\end{proof}

\begin{figure}[htp]
\centering
\includegraphics[width=7cm]{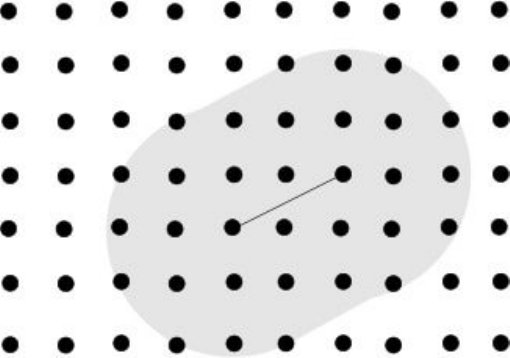}
\smallskip

\caption{For a given edge $uv$, all vertices that are adjacent to an edge that crosses $uv$ are contained in the shaded region.}
\label{fig_thm3}
\end{figure}

\bigskip


For the proof of \Cref{k-planar}, we  need some preparation. The {\em crossing number} $\cro(G)$ is the minimum number of edge crossings over all drawings of $G$ in the plane. According to the Crossing Lemma \cite{ACNS82, L84}, for every graph $G$ with $n$ vertices and $e\ge 4n$ edges, ${\cro}(G)\ge \frac{1}{64}\frac{e^3}{n^2}$. It is asymptotically tight in general for simple graphs \cite{PT97}. However, there are better bounds for graphs satisfying some monotone property \cite{PST00}, or for graphs drawn in monotone drawing styles \cite{KPTU21}. A drawing style $\cal D$ is a subset of all drawings of a graph $G$, so some drawings belong to $\cal D$, others do not. A drawing style is monotone if removing edges retains the drawing style, that is, for every graph $G$ in drawing style $\cal D$ and any edge removal, the resulting graph with its inherited drawing is again in drawing style $\cal D$.

A vertex split is the following operation: (a) Replace a vertex $v$ of $G$ by two vertices, $v_1$ and $v_2$, both very close to $v$. Connect each edge of $G$ incident to $v$ either to $v_1$ or $v_2$ by locally modifying them  such that no additional crossing is created. Or as an extreme or limiting case, (b) place both $v_1$ and $v_2$ to the same point where $v$ was, connect each edge incident to $v$ either to $v_1$ or $v_2$ without modifying them, such that the edges incident to $v$ in $G$ that are connected to $v_1$ (resp.\ $v_2$) after the split form an interval in the clockwise order from $v$. See \cref{fig_vertexsplit}. A drawing style $\cal D$ is split-compatible if performing vertex splits retains the drawing style.

\begin{figure}[htp]
\centering
\includegraphics[width=11cm]{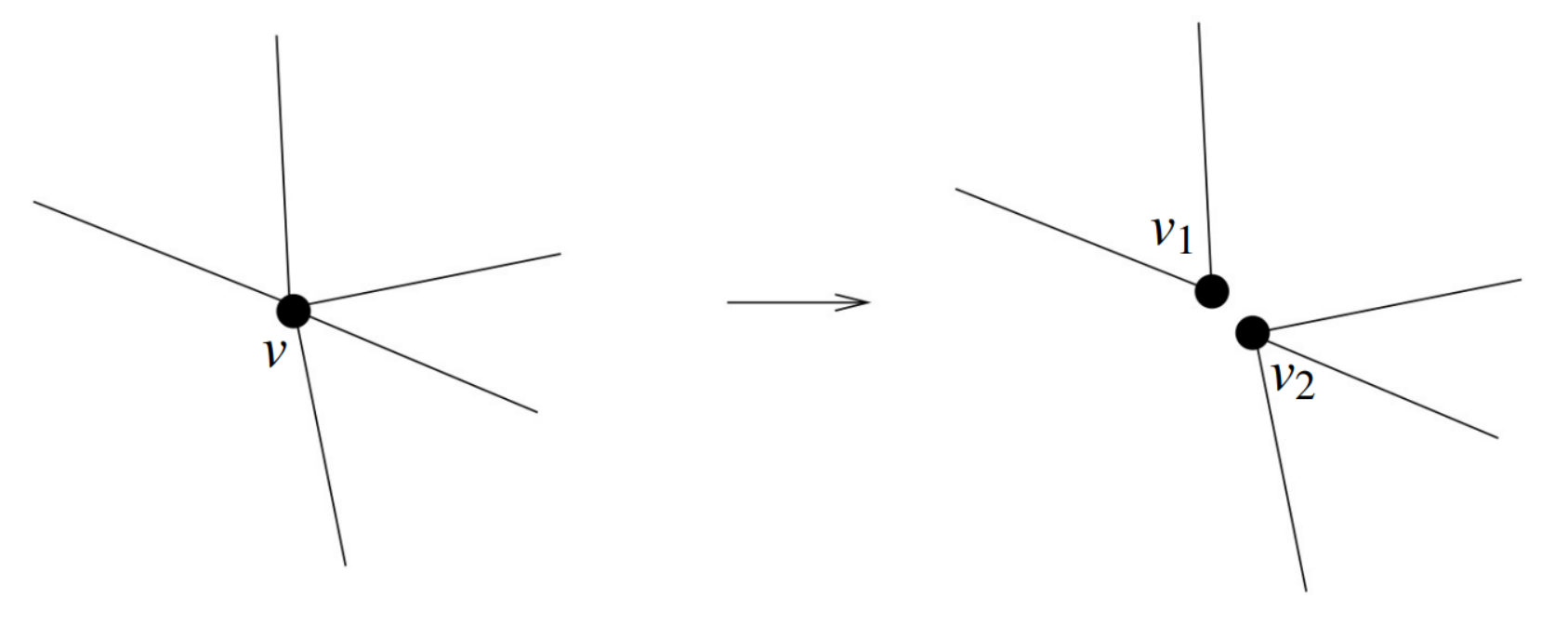}
\caption{A vertex split.}
\label{fig_vertexsplit}
\end{figure}

For any graph $G$, the $\cal D$-crossing number, 
$\cro_{\cal D}(G)$ is  
the minimum number of edge crossings over all drawings of $G$ in the plane, in drawing style $\cal D$.
The bisection width $b(G)$ of $G$ is the smallest number of edges whose removal splits $G$ into two graphs, $G_1$ and $G_2$, such that $|V(G_1)|$, $|V(G_2)| \geq |V(G)|/5$.
For a drawing style $\cal D$ the $\cal D$-bisection width $b_{\cal D}(G)$ of a graph $G$ in drawing style $\cal D$ is the smallest number of edges whose removal splits $G$ into two graphs, $G_1$ and $G_2$, both in drawing style $\cal D$ such that $|V(G_1)|$, $|V(G_2)| \geq |V(G)|/5$. Let $\Delta(G)$ denote the maximum degree in $G$. The following result is a generalization of the Crossing Lemma.

\begin{theorem} [Kaufmann, Pach, T\'oth, Ueckerdt \cite{KPTU21}] \label{Kaufmann-Pach-Toth-Ueckerdt}
Suppose that $\cal D$ is a monotone and split-compatible drawing style, and there are constants $k_1$, $k_2$, $k_3 >0$ and
$b > 1$ such that each of the following holds for every graph $G$ with $n$ vertices and $e$ edges drawn in style $\cal D$:
\begin{itemize}
\item[1.]  If ${\cro}_{\cal D}(G) = 0$, then $e \leq k_1 \cdot n$.
\item[2.] The $\cal D$-bisection width satisfies 
$b_{\cal D}(G) \leq k_2 \sqrt{{\cro}_{\cal D}(G) + \Delta(G) \cdot e + n}$.
\item[3.] $e \leq k_3 \cdot n^b$.
\end{itemize}

Then there exists a 
constant $\alpha > 0$ such that for any graph $G$ with $n$ vertices and $e$ edges, drawn in drawing style $\cal D$, we have

$${\cro}_{\cal D}(G) \geq\alpha\frac{e^{1/(b-1)+2}}{n^{1/(b-1)+1}} \text{ \qquad provided } e > (k_1 + 1)n. $$
\end{theorem}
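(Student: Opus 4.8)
The plan is to prove \Cref{Kaufmann-Pach-Toth-Ueckerdt} by the standard bisection/induction strategy underlying the Crossing Lemma, generalized along the lines of Leighton and of Pach--Shahrokhi--Szegedy, but carried out entirely inside the drawing style $\cal D$. Since $\cal D$ is split-compatible, we will first reduce to bounded-degree graphs: starting from a drawing of $G$ in style $\cal D$, repeatedly split every vertex of degree $> 4e/n$ into a path of low-degree vertices, distributing incident edges in clockwise intervals as in case (b) of a vertex split. This creates no new crossings and stays in $\cal D$; the new graph $G'$ has at most $2n$ vertices, at least $e$ edges (the splitting path edges are extra, but we can instead bound them away or absorb them into constants), maximum degree $O(e/n)$, and ${\cro}_{\cal D}(G')\le {\cro}_{\cal D}(G)$. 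So it suffices to prove a crossing bound for $G'$ and translate it back.

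The core is a divide-and-conquer recursion using hypothesis 2. Fix an optimal $\cal D$-drawing of the (now bounded-degree) graph. Apply the $\cal D$-bisection width bound to split $G$ into $G_1,G_2\in\cal D$, each with at least $n/5$ vertices, by removing $b_{\cal D}(G)\le k_2\sqrt{{\cro}_{\cal D}(G)+\Delta(G)\cdot e+n}$ edges; recurse on $G_1$ and $G_2$ with their inherited drawings (legitimate since $\cal D$ is monotone). The crossings of $G$ split as: crossings internal to $G_1$, crossings internal to $G_2$, and crossings involving at least one removed edge. Summing the recursion down to pieces of bounded size, where hypothesis 1 forces $e\le k_1 n$ and hence no crossings are required once a piece is small and sparse, yields a lower bound on ${\cro}_{\cal D}(G)$ in terms of $e$ and $n$. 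The exponent $1/(b-1)+2$ arises exactly because hypothesis 3 ($e\le k_3 n^b$) controls how fast the edge count can shrink as we descend the recursion tree: the standard computation (see \cite{PSS96, KPTU21}) shows the edge surplus $e-(k_1+1)n$ is dissipated over $\Theta\big(\log\frac{e}{n}\big)$ levels in a way that, combined with the $\sqrt{\cdot}$ in the bisection bound, produces the claimed power. The $\Delta(G)\cdot e$ term in hypothesis 2 is why the degree-reduction step is needed first: with $\Delta=O(e/n)$ this term is $O(e^2/n)$, comparable to the target crossing number, so it does not overwhelm the recursion.

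The main obstacle, and the step deserving the most care, is bookkeeping the recursion cleanly: one must track simultaneously (i) the number of vertices in each recursively produced subgraph, which only decreases by a constant factor per level so the recursion has depth $\Theta(\log n)$, but it is the depth measured against $e/n$ that matters for the exponent; (ii) that every subgraph at every level still lies in $\cal D$ (monotonicity handles edge deletion from bisection, but one must confirm that the inherited sub-drawings are the ones to which we re-apply hypothesis 2); and (iii) that the removed-edge crossings, summed over all levels, telescope correctly. A secondary subtlety is the interaction of the initial vertex-splitting with hypothesis 3: after splitting, $|V(G')|$ and $|E(G')|$ change by constant factors, so $e\le k_3 n^b$ survives with an adjusted constant, and the final inequality ${\cro}_{\cal D}(G)\ge \alpha\, e^{1/(b-1)+2}/n^{1/(b-1)+1}$ for $G$ follows by pushing the bound for $G'$ back through ${\cro}_{\cal D}(G)\ge{\cro}_{\cal D}(G')$ minus nothing and re-expressing in the original parameters. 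All of these are routine in the sense that they mirror the proof of the classical Crossing Lemma, but the constants $k_1,k_2,k_3,b$ must be carried symbolically throughout, and the side condition $e>(k_1+1)n$ is precisely what guarantees the recursion bottoms out with a genuine crossing surplus rather than vacuously.
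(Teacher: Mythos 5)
The paper itself gives no proof of \Cref{Kaufmann-Pach-Toth-Ueckerdt}: it is quoted from \cite{KPTU21}, the only original remark being that the argument there (stated for type (a) splits) also works for type (b) splits. Your outline does identify the strategy actually used in \cite{KPTU21} (following Pach--Spencer--T\'oth \cite{PST00} and Leighton/Pach--Shahrokhi--Szegedy \cite{PSS94}): reduce to bounded degree by vertex splitting, then run a bisection-width recursion in which hypothesis 2 bounds the cost of each cut and hypothesis 3 governs when the recursion bottoms out, which is where the exponent $1/(b-1)+2$ comes from. So the approach is the right one.

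As a proof, however, the proposal has genuine gaps. First, the decisive quantitative step --- the recursion bookkeeping that actually yields the exponent --- is never carried out; you delegate it to ``the standard computation (see \dots KPTU21)'', i.e.\ essentially to the result being proved, so beyond a road map nothing is established. Second, the degree-reduction step is misdescribed: a vertex split as defined here inserts no ``splitting path edges'' between the copies; the correct statement is that splitting each vertex of degree exceeding $2e/n$ into copies of degree at most $2e/n$ adds at most $n$ new vertices, keeps the edge set and the crossings, and stays in $\cal D$ by split-compatibility, giving $\Delta(G')=O(e/n)$ and ${\cro}_{\cal D}(G')\le{\cro}_{\cal D}(G)$, after which the bound for $G'$ transfers back with only constant losses. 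Third, the termination of the recursion conflates hypotheses 1 and 3: a small piece need not satisfy ${\cro}_{\cal D}=0$, so hypothesis 1 alone does not bound its edges; it is hypothesis 3 that caps the edges of the small pieces (this is exactly where $b$ enters the exponent), while hypothesis 1 is used via monotonicity to obtain the weak bound ${\cro}_{\cal D}(G)\ge e-k_1 n$ (repeatedly delete a crossed edge), which is what turns the assumption $e>(k_1+1)n$ into a genuine crossing surplus at the base of the argument. Until the explicit induction/summation over the recursion tree is written out with the constants $k_1,k_2,k_3,b$ carried through, the proposal is a correct plan but not a proof.
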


In \cite{KPTU21} only vertex split of type (a) was allowed, but the proof goes through also for type (b).

\begin{theorem} [Spencer, Szemer\'edi, Trotter \cite{SST84}]
    Let $G$ be a unit distance graph on $n$ vertices. The number of edges in $G$ is at most $c n^{4/3}$ where $c>0$ is a constant.
\end{theorem}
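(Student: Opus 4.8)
The plan is to deduce this from the Crossing Lemma stated above, via the classical circle‑arrangement argument of Spencer, Szemer\'edi and Trotter as streamlined by Sz\'ekely. Write $P$ for the set of $n$ points and $e$ for the number of pairs at distance exactly $1$; the goal is $e = O(n^{4/3})$, and I may assume $e > n^{4/3}$ since otherwise there is nothing to prove. For each $p \in P$ let $C_p$ be the unit circle centred at $p$; then $q$ lies on $C_p$ precisely when $\{p,q\}$ realises the unit distance, so $\sum_{p \in P} |C_p \cap P| = 2e$.

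Next I would build a drawing in which many crossings are forced. First discard the circles $C_p$ containing at most two points of $P$; these account for only $O(n)$ of the $2e$ incidences and are negligible. For each remaining circle the $\ge 3$ points of $P$ on it partition it into that many arcs; take all such arcs as edges of a graph $G$ on vertex set $P$, drawn along the circles. Then $G$ has $2e - O(n) = \Omega(e)$ edges. Crossings are cheap: two arcs of the same circle meet only at common endpoints, and two arcs on different circles can cross only where those two circles cross, and two distinct circles meet in at most two points, so the drawing has fewer than $2\binom{n}{2} < n^2$ crossings. To pass to a simple graph I note that a pair $\{a,b\}$ lies on at most two of the circles $C_p$ (such a centre $p$ is one of the at most two points at distance $1$ from both $a$ and $b$), so $G$ has edge‑multiplicity at most $2$; deleting one edge from each parallel pair leaves a simple graph $G'$ with $\Omega(e)$ edges and still fewer than $n^2$ crossings, since erasing edges cannot create crossings.

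I would then apply the Crossing Lemma to $G'$ to finish: if the edge count of $G'$ is below $4n$ then $e = O(n)$ and we are done a fortiori; otherwise ${\cro}(G') \ge \tfrac1{64}\,(\Omega(e))^3/n^2$, and combining this with ${\cro}(G') < n^2$ yields $e = O(n^{4/3})$. The only genuine idea here is the reduction to a bounded‑crossing drawing by circle arcs, together with the fact that two distinct circles meet at most twice; the point to be careful about is that all the reductions, namely throwing away short circles, collapsing double edges, and meeting the $e \ge 4n$ hypothesis of the Crossing Lemma, cost only additive $O(n)$, which is dominated by the $n^{4/3}$ target. As an alternative route, the same bound follows from the Szemer\'edi--Trotter‑type estimate of $O(n^{4/3})$ for the number of incidences between $n$ points and $n$ unit circles, applied directly to the $2e$ point--circle incidences above.
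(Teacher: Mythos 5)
Your proof is correct, but note that the paper does not prove this statement at all: it is quoted as a known result with a citation to Spencer, Szemer\'edi and Trotter \cite{SST84}, and is then used as a black box (via the constant of \'Agoston and P\'alv\"olgyi \cite{AP22}) to verify hypothesis 3 of \Cref{Kaufmann-Pach-Toth-Ueckerdt}. What you have written is the standard Sz\'ekely-style derivation from the Crossing Lemma, and the steps check out: circles with at most two points of $P$ lose only $O(n)$ incidences; on a retained circle any two points are joined by at most one arc, and a pair $\{a,b\}$ lies on at most two unit circles $C_p$ (the centres are the at most two points at distance $1$ from both), so the arc multigraph has multiplicity at most $2$ and pruning to a simple graph keeps $\Omega(e)$ edges; the drawing has fewer than $n^2$ crossings since two distinct unit circles meet at most twice and same-circle arcs are internally disjoint; and the case split $e' < 4n$ versus $e' \ge 4n$ correctly meets the hypothesis of the Crossing Lemma, giving $e = O(n^{4/3})$. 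This is in fact a cleaner argument than the original one in \cite{SST84}, though it does not yield the explicit constant $1.94$ that the paper imports from \cite{AP22}; for the purposes of \Cref{k-planar} any absolute constant $c$ would do, so your argument would suffice there as well.
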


The best known constant is due to \'Agoston and P\'alv\"olgyi \cite{AP22} who proved that the statement holds with $c=1.94$.

\smallskip

\begin{proof}[\bf Proof of \Cref{k-planar}.]
Consider now the following drawing style $\cal D$ for a graph $G$.

\begin{itemize}
\item[1.] Vertices are represented by not necessarily distinct points of the plane. 
\item[2.]  Edges are represented by unit segments between the corresponding points.
\item[3.]  The intersection of two  edges is empty or a point, that is, edges cannot overlap.
\item[4.]  If a point $p$ represents more than one vertex, say $v_1, \, \dots, \, v_m$, then 
the sets of edges incident to $v_1, \, \dots, \, v_m$, respectively, form an interval in the clockwise order
from point $p$. 
\end{itemize}

Clearly,  ${\cal D}$ satisfies the following properties.
\begin{itemize}
\item[1.] The drawing style $\cal D$ is monotone and split-compatible. 
\item[2.]  If ${\cro}(G) = 0$, then $e \leq 3n-6$. In fact, by \cite{LS22}, $e\le \lfloor 3n - \sqrt{12n - 3}\rfloor$. 
\item[3.] For any graph $G$, we have  $b(G)\le 10\sqrt{{\cro}(G) + \Delta(G) \cdot e + n}$ by the result of 
Pach, Shahrokhi and Szegedy \cite{PSS94}.
But if $G$ is drawn in drawing style ${\cal D}$, then all of its subgraphs are also drawn in drawing style  ${\cal D}$. Therefore, $b_{\cal D}(G)\le 10\sqrt{{\cro}(G) + \Delta(G) \cdot e + n}$. 
\item[4.]  By \cite{AP22}, any graph with $n$ vertices drawn in  style $\cal D$ has at most $1.94n^{4/3}$ edges.
\end{itemize}

Summarizing, we can apply \Cref{Kaufmann-Pach-Toth-Ueckerdt} with $k_1=3$, $k_2=10$, $k_3=1.94$, $b=4/3$ and obtain the following. For any graph $G$ in drawing style $\cal D$ with $n$ vertices and $e>4n$ edges, we have 
$$\cro_{\cal D}(G) \geq \alpha \frac{e^{1/(b-1)+2}}{n^{1/(b-1)+1}}=\alpha\frac{e^5}{n^4}$$
for some $\alpha >0$.

Consider now a $k$-plane drawing of a unit distance graph $G$ with $n$ vertices and $e$ edges. If $e\le 4n$, we are done, so suppose that $e\ge 4n$. Since each edge contains at most $k$ crossings, the total number of crossing $c(G)$ satisfies $c(G)\le ek/2$. On the other hand, we have $c(G)\ge \alpha\frac{e^5}{n^4}$.
Therefore, $ek/2\ge \alpha\frac{e^5}{n^4}$ so 
$e\le \beta\sqrt[4]{k}n$ for some $\beta>0$.
\end{proof}

\section{{\em k}-quasiplanar unit distance graphs}\label{sec4}

\begin{proof}[\bf Proof of \Cref{quasi}.]
Let $G=(V, \, E)$ be a unit distance graph with $|V|=n$ vertices drawn in the plane with no $k$ pairwise crossing edges. We can assume without loss of generality that no pairs of vertices of $G$ determine an angle of $0$, $\pi/2$ or $\pi$ with the $x$-axis. We can also assume that at least half of the edges have positive slope, otherwise, we rotate the coordinate system by $\pi/2$. Let $E_1$ be the set of edges with positive slope. Delete all other edges from $G$ and let $G_1=(V, \, E_1)$ be the obtained graph.

Substitute now each $e\in E_1$ by two directed edges, both drawn as straight line segments, $e^l$, pointing to the left, and $e^r$, pointing to the right, and let $G_2=(V, \, E_2)$ be the resulting directed graph. We have $|E_2|\ge |E|$. Edges that point to the left (resp.\ right) are called {\em left edges} (resp.\ {\em right edges}). Now we decompose the edges into {\em blocks}. Edges in a given block will form a directed path. Suppose that $e$ is a right edge, let $r$ be its right endpoint. Let $f$ be the left edge, whose right endpoint is also $r$ and it is immediately below $e$ (if such an $f$ exists). We say that $f$ is the {\em continuation} of $e$. Suppose now that $e$ is a left edge, let $l$ be its left endpoint. Let $f$ be the right edge, whose left endpoint is also $l$ and it is immediately above $e$ (if such an $f$ exists). In this case we also say that $f$ is the {\em continuation} of $e$.

Define the binary relation between edges in $E_2$ as follows.
For any $e, \, f\in E_2$, $e\sim f$ if $e$ is a continuation of $f$ or $f$ is a continuation of $e$. Finally, let the relation $\approx$ be the transitive closure of $\sim$. In other words, we can take a continuation of an edge arbitrarily many times. The relation $\approx$ is an equivalence relation, its equivalence classes are called {\em blocks}.

\smallskip

\begin{lemma} \mbox{}

\begin{enumerate}[label=(\roman*)] 
    \item Each block is a path of at most $2k-2$ edges.
    \item There are at most $2n$ blocks.
\end{enumerate}
\end{lemma}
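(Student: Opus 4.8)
The plan is to base both statements on one monotonicity observation. For a directed edge $e\in E_2$ let $\lambda(e)>0$ denote the slope of the unit segment representing $e$. The key claim is that if $f$ is the continuation of $e$, then $\lambda(f)>\lambda(e)$. Indeed, $e$ and $f$ share a vertex $z$: if $e$ is a right edge, then $z$ is the common right endpoint of the two segments, both segments leave $z$ into the lower-left quadrant, and ``$f$ immediately below $e$'' means exactly that $f$ is the steeper of the two; if $e$ is a left edge, then $z$ is the common left endpoint, both segments leave $z$ into the upper-right quadrant, and ``$f$ immediately above $e$'' again means that $f$ is steeper. Hence along any block the slopes strictly increase. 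This already gives that no directed edge (indeed no segment) repeats within a block, that $\approx$ has no cyclic orbit, and that a block is a directed path $v_0\to v_1\to\dots\to v_m$ in $G_2$ whose edges alternate between right edges and left edges; the pairwise distinctness of $v_0,\dots,v_m$ will follow from the endpoint bookkeeping of the next paragraph, so a block really is a path.

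\textbf{Part (i).} It suffices to show that the right edges of any block pairwise cross and, symmetrically, that its left edges pairwise cross: then a block has at most $k-1$ right edges and at most $k-1$ left edges, hence at most $2k-2$ edges in all. Let $R^{(1)},R^{(2)},\dots$ be the right edges of a block in their order along the block, with $P^{(i)}$ and $Q^{(i)}$ the lower-left and upper-right endpoints of $R^{(i)}$, and let $L$ be the unique left edge of the block between $R^{(i)}$ and $R^{(i+1)}$, so $\lambda(R^{(i)})<\lambda(L)<\lambda(R^{(i+1)})$. A direct computation from these inequalities shows that $P^{(i+1)}$ lies strictly to the right of and strictly below $P^{(i)}$, while $Q^{(i+1)}$ lies strictly to the left of and strictly above $Q^{(i)}$. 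In particular the $x$-intervals of the right edges are strictly nested: for $a<b$ one gets $x(P^{(a)})<x(P^{(b)})<x(Q^{(b)})<x(Q^{(a)})$. Moreover a ``move right and down'' step carries any point below the line through $R^{(a)}$ to another point below that line, and $P^{(a+1)}$ lies below it; hence every $P^{(b)}$ with $b>a$ lies strictly below the line through $R^{(a)}$, and symmetrically every $Q^{(b)}$ with $b>a$ lies strictly above it. Thus $R^{(b)}$ meets the line through $R^{(a)}$ in one point whose $x$-coordinate lies in $(x(P^{(b)}),x(Q^{(b)}))\subset (x(P^{(a)}),x(Q^{(a)}))$, so $R^{(a)}$ and $R^{(b)}$ cross in the interior of both segments. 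Since $G$ has no $k$ pairwise crossing edges, this proves (i).

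\textbf{Part (ii).} I would count blocks via the digraph $D$ on vertex set $E_2$ having an arc $e\to f$ precisely when $f$ is the continuation of $e$. Each directed edge has at most one continuation and is the continuation of at most one directed edge, so $D$ has in-degree and out-degree at most one; combined with the absence of directed cycles, $D$ is a disjoint union of directed paths, so the number of blocks equals $|E_2|$ minus the number of arcs of $D$. To bound the number of arcs from below, fix a vertex $v$ and let $a(v)$ be the number of edges of $G_1$ with $v$ as their upper-right endpoint; ordering them by increasing slope as $g_1,\dots,g_{a(v)}$, the continuation of $g_i^r$ is exactly $g_{i+1}^l$ for $1\le i<a(v)$, producing $a(v)-1$ arcs ``at $v$ from above'', and symmetrically the $b(v)$ edges of $G_1$ with $v$ as their lower-left endpoint produce $b(v)-1$ arcs ``at $v$ from below''. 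Every arc of $D$ arises in exactly one such way, hence the number of arcs is $\sum_v\big(\max(a(v)-1,0)+\max(b(v)-1,0)\big)\ge \sum_v(a(v)-1)+\sum_v(b(v)-1)=2|E_1|-2n=|E_2|-2n$, where we used $\sum_v a(v)=\sum_v b(v)=|E_1|$ and $|E_2|=2|E_1|$. Therefore the number of blocks is at most $|E_2|-(|E_2|-2n)=2n$.

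\textbf{Main obstacle.} I expect the only delicate point to be the geometric claim in (i) -- that \emph{all} right edges of a block pairwise cross, not merely consecutive ones. The slope-monotonicity produces the nested $x$-intervals and the one-sidedness of the later endpoints with respect to the line through $R^{(a)}$ quite mechanically, but one has to keep every inequality strict to obtain a crossing in the interior of both segments, and one should record that the left edges of a block, listed in order, have their endpoints marching according to the same pattern, so the identical argument applies to them (equivalently, the reflection $(x,y)\mapsto(-y,-x)$ preserves unit distances, crossings and positive slopes while swapping left and right edges and respecting the continuation relation, which reduces the left-edge case to the right-edge one).
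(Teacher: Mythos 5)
Your proof is correct and follows essentially the paper's route: your ``marching'' of the corners $P^{(i)},Q^{(i)}$ is exactly the paper's nested-rectangle relations for the boxes $R(e_i)$, from which the paper concludes that edges at index distance at least $2$ along a block pairwise cross (equivalently, your statement that the right edges pairwise cross and the left edges pairwise cross, giving the same $2k-2$ bound), and your functional-digraph count of blocks is a reformulation of the paper's observation that at most two blocks can end at any vertex, since only the lowest edge into a right endpoint and the highest edge out of a left endpoint lack continuations. The one slip is the parenthetical symmetry: $(x,y)\mapsto(-y,-x)$ inverts slopes and turns ``immediately below at the right endpoint'' into ``immediately below at the left endpoint'', so it does not respect the continuation relation; use the central symmetry $(x,y)\mapsto(-x,-y)$ instead, or simply keep your direct argument for the left edges, which goes through verbatim --- and do make explicit that the corner computation uses that all segments have unit length, since the slope inequalities alone do not yield it.
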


\begin{proof}
(i) An edge has 
at most one continuation. Therefore, when we take the transitive closure of $\sim$, we simply iterate the continuation operation. Consequently, every block is a directed walk. Note that the slope of any edge is smaller than the slope of its continuation, therefore, the same edge cannot appear twice on this walk, consequently, every block is a trail.

For any edge $e\in E_2$, let $R(e)$ be the smallest axis-parallel rectangle that contains $e$. Since all edges have positive slopes,
the right endpoint of $e$ is the upper right corner of $R(e)$ and the left endpoint is the lower left endpoint. For any axis-parallel rectangle $R$, let $\left[x_1(R), \, x_2(R)\right]$ (resp.\ $\left[y_1(R), \, y_2(R)\right]$) be its projection on the $x$-axis (resp.\ $y$-axis).

Let $e$ be a right edge. Observe that if $f$ is the continuation of $e$ then
\begin{align} \label{subsets1}
x_1(R(e)) < x_1(R(f)) < x_2(R(f)) = x_2(R(e))
\quad \text{and} \quad 
y_1(R(f)) < y_1(R(e)) < y_2(R(e)) = y_2(R(f)).
\end{align}

Similarly, if $e$ is a left edge and $f$ is its continuation then

\begin{align} \label{subsets2}
x_1(R(e)) = x_1(R(f)) < x_2(R(f)) < x_2(R(e))
\quad \text{and} \quad 
y_1(R(f)) = y_1(R(e)) < y_2(R(e)) < y_2(R(f)).
\end{align}

Suppose now that $(e_1, \, e_2, \, \dots, \, e_m)$ is a trail, which is an equivalence class of $\approx$, and for any
$i$, $1\le i<m$, $e_{i+1}$ is the continuation of $e_i$.
Using (\ref{subsets1}) and (\ref{subsets2}), we obtain that
$x_1(R(e_{i}))<x_1(R(e_{i+2}))<x_2(R(e_{i+2}))<x_2(R(e_{i}))$ (see \Cref{quasiplanar}).

\begin{figure}[ht]
\centering
\includegraphics[width=9cm]{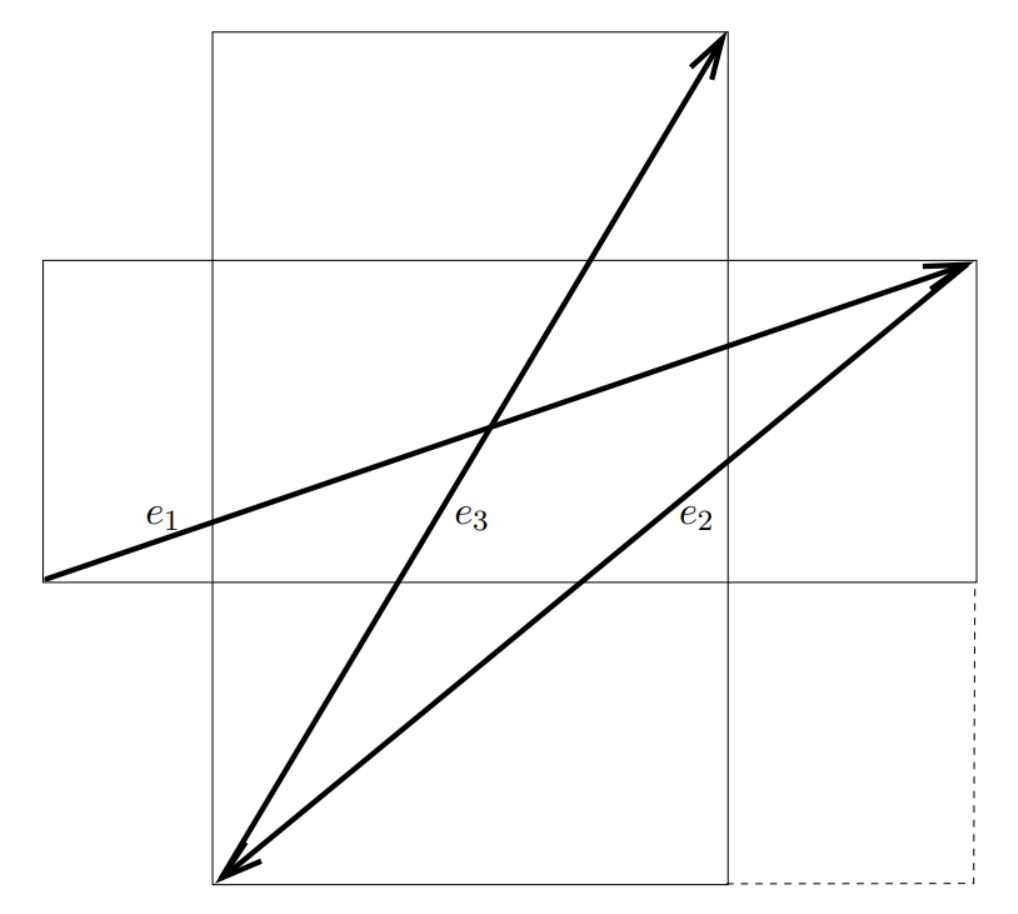}
\caption{$x_1(R(e_1))<x_1(R(e_3))<x_2(R(e_3))<x_2(R(e_1)$ and
$y_1(R(e_3))<y_1(R(e_1))<y_2(R(e_1))<y_2(R(e_3))$.}
\label{quasiplanar}
\end{figure}

It follows that for any $1\le i, \, j\le m$, $j\ge i+2$, we have
$x_1(R(e_i))<x_1(R(e_j))<x_2(R(e_j))<x_2(R(e_i))$. 

By a similar argument, for any $1\le i, \, j\le m$, $j\ge i+2$, we have
$y_1(R(e_{j}))<y_1(R(e_i))<y_2(R(e_i))<y_2(R(e_{j}))$. 

This implies that each equivalence class is a {\em directed path}, since no vertex can be repeated on it. It also implies that $e_1, \, e_3, \, \dots, \, e_{2s+1}$ ($2s+1\le m$) are independent edges and they are pairwise crossing. Since we do not have $k$ pairwise crossing edges in $G$, we have $m\le 2k-2$.

(ii) Suppose that a block, which is a path
$P= e_1, \, e_2, \, \dots, \, e_m$, ends in vertex $v\in V$.
Then $e_m$ has no continuation. If $v$ is the right endpoint of $e_m$, then $e_m$ is the lowest of all edges in $G$ with right endpoint $v$. Similarly, if $v$ is the left vertex of $e_m$, then $e_m$ is the highest of all edges in $G$ with left endpoint $v$. Therefore, in each vertex at most two blocks can end, so there are at most $2n$ blocks.
\end{proof}

Now \Cref{quasi} (i) follows directly. Edges in $E_2$ are divided into blocks. Each edge belongs to exactly one block, and there are at most $2n$ blocks, each of size at most $2k$. Therefore, $|E|\le |E_2|< 4kn$.

\smallskip

Next, we prove \Cref{quasi} (ii). Suppose that  $k\le 2^{c\left(\log n/\log\log n \right)}n$. Consider again the construction of Erd\H os from the proof of \Cref{rote}.
Take a  $\sqrt{n}\times\sqrt{n}$ unit square grid and a square-free number $r=o(n)$ that can be written as
$a^2+b^2$ in $m\ge k$ different ways where $a$ and $b$ are integers. Connect two grid points by a straight line segment
if they are at distance $\sqrt{r}$. Observe that no edge contains a vertex in its interior, thus it defines a geometric graph. All edges have one of $m$ distinct directions and almost all vertices have degree $2m$. Now pick $(k-1)$ directions and only keep the edges in these directions. Then all edges have one of the $(k-1)$ distinct directions and almost all vertices have degree $2(k-1)$. So we have $(k-1)n-o(n)$ edges. Since any two crossing edges have different directions, we do not have $k$ pairwise crossing edges and the Theorem follows.
\end{proof}

Observe that the condition $k=2^{O\left(\log n/\log\log n \right)}n$ seems very hard to relax. In order to replace it with a weaker condition, we have to improve the construction of Erd\H os \cite{E46} for the maximum number of unit distances in a planar point set.

\section{Open questions} \label{open_questions}

In this paper we proved that a $1$-planar unit distance graph on $n$ vertices can have at most $u_1(n)\le 3n-\sqrt[4]{n}/10$ edges. 

Clearly, for every $n$, $u_1(n)\ge u_0(n)=\lfloor 3n - \sqrt{12n - 3}\rfloor$, and 
we could not rule out the possibility that
$u_0(n)= u_1(n)$ for every $n$. 
As we mentioned in the introduction, very recently,  \v{C}ervenkov\'a \cite{C25}
proved that it is not the case. However, her beautiful construction 
has just $u_0(n)+1$ or $u_0(n)+2$ edges, for 
infinitely many values of $n$. Her work is in progress, she claims to have some further improvements. 

\begin{problem}
Is it true that
    \begin{enumerate}[label=(\roman*)]
        \item $3n-u_1(n)=\Omega(\sqrt{n})$?

\item $u_1(n)-u_0(n)=\Omega(\sqrt{n})$?
\end{enumerate}

\end{problem}

Even more surprisingly, for $k=2$, we do not have a construction with asymptotically more than $3n$ edges. 
The best construction is by  D\'aniel Simon (personal communication, 2023) with roughly $3n-\sqrt{8.3n}$ edges. For $k=3$ there is an easy construction (a piece of a unit triangular grid and its shifted copy by a unit vector) with $3.5n - c\sqrt{n}$ edges for some constant $c>0$.

\begin{problem}
 Determine $u_k(n)$, the maximum number of edges of a k-planar unit distance graph. 
\end{problem}

It is easy to construct an $r$-regular 1-planar unit distance graph (or even a matchstick graph) for $r \leq 3$. There are $4$-regular matchstick graphs, the smallest known example, due to  Harborth, has $52$ vertices \cite{Harborth2}, see also \cite{WD17}. It was shown by Kurz \cite{K11} that any such graph has at least $34$ vertices. Since matchstick graphs are planar, there are no $r$-regular matchstick graphs for $r\ge 6$. Blokhuis \cite{B14} and independently, Kurz and Pinchasi \cite{KP11} proved that there are no $5$-regular matchstick graphs. By the above results, there are  $r$-regular 1-planar unit distance graphs for $r\leq 4$. On the other hand, it follows from \Cref{1-planar} that there are no $r$-regular 1-planar unit distance graphs for $r\ge 6$.

\begin{problem}
Are there any 5-regular 1-planar unit distance graphs?
\end{problem}

\vspace{5mm}

\noindent
{\bf \large Acknowledgments.}
We are very grateful to the anonymous referees for their suggestions. The authors were supported by ERC Advanced Grant `GeoScape' No.\ 882971 and by the National Research, Development and Innovation Office, NKFIH, K-131529. Panna Geh\'er was also supported by the Lend\"ulet Programme of the Hungarian Academy of Sciences -- grant number LP2021-1/2021.

\small


\begin{thebibliography}{30} 


\bibitem{A09} Ackerman, E.\ (2009).
{\it On the Maximum Number of Edges in Topological Graphs with no Four Pairwise Crossing Edges.}
Discrete and Computational Geometry 41(3), 365--375.

\bibitem{A19}
Ackerman, E.\ (2019).
{\it On topological graphs with at most four crossings per edge.}
Computational Geometry, 85, 101574.

\bibitem{AT07} Ackerman, E., Tardos, G.\ (2007).
{\it On the maximum number of edges in quasi-planar graphs.}
Journal of Combinatorial Theory, Series A, 114(3), 563--571.

 \bibitem{AP22}
\'Agoston, P., P\'alv\"olgyi, D.\ (2022).
{\it An improved constant factor for the unit distance problem.}
Studia Scientiarum Mathematicarum Hungarica, 59(1), 40--57.


\bibitem{ACNS82}
Ajtai, M., Chv\'atal, V., Newborn, M.\ M., Szemer\'edi, E.\ (1982).
{\it Crossing-free subgraphs.}
In North-Holland Mathematics Studies (Vol.\ 60, pp.\ 9--12), North-Holland.

\bibitem{B14}
Blokhuis, A.\ (2014).
{\it Regular finite planar maps with equal edges.}
arXiv preprint arXiv:1401.1799.

\bibitem{BMP05}
Brass, P., Moser, W., Pach, J.\ (2005).
{\it Research problems in discrete geometry.}
Vol.\ 18, New York: Springer.


\bibitem{C93}
Do Carmo, M.\ P.\ (2013).
{\it Differentialgeometrie von Kurven und Fl\"achen.}
Vol.\ 55, Springer-Verlag.


\bibitem{C25}
\v{C}ervenkov\'a, E.\ (2025).
{\it Construction of 1-planar unit distance graphs with more edges than matchstick graphs.}
EuroCG 2025.

\bibitem{E46}
Erd\H{o}s, P.\ (1946).
{\it On sets of distances of $n$ points.}
The American Mathematical Monthly 53(5), 248--250.


\bibitem{FP14}
Fox, J., Pach, J.\ (2014).
{\it Applications of a new separator theorem for string graphs.} 
Combinatorics, Probability and Computing 23(1), 66--74.

\bibitem{FPS24}
Fox, J., Pach, J., Suk, A.\ (2024).
{\it Quasiplanar graphs, string graphs, and the Erd\H{o}s–Gallai problem.}
European Journal of Combinatorics 119:103811.


\bibitem{Harborth74}
Harborth, H.\ (1974).
{\it Solution to problem 664A.}
Elemente der Mathematik 29, 14--15.

\bibitem{Harborth1}
Harborth, H.\ (1981).
{\it Point sets with equal numbers of unit-distant neighbors.}
Discrete Geometry, 12--18.

\bibitem{Harborth2}
Harborth, H.\ (1994).
{\it Match sticks in the plane.}
In The lighter side of mathematics. Proceedings of the Eugne Strens memorial conference on recreational mathematics and its history (pp.\ 281--288).


\bibitem{KS92}
Kannan, S., Soroker, D.\ (1992).
\emph{Tiling polygons with parallelograms.}
Discrete and Computational Geometry, 7, 175--188.


\bibitem{KPTU21}
Kaufmann, M., Pach, J., T\'oth, G., Ueckerdt, T.\ (2018).
{\it The number of crossings in multigraphs with no empty lens.}
In: Proc.\ Graph Drawing and Network Visualization: 26th International Symposium, GD 2018, Barcelona, Spain, September 26-28, 2018, pp.\ 242--254 Springer, Cham. Also in: Journal of Graph Algorithms and Applications 25, 383--396. 



\bibitem{K93}
Kenyon, R.\ (1993).
\emph{Tiling a polygon with parallelograms.}
Algorithmica, 9, 382--397.



\bibitem{K11}
Kurz, S.\ (2011).
{\it Fast recognition of planar non unit distance graphs -- Searching the minimum 4-regular planar unit distance graph.}
Geombinatorics Quarterly 21, 25--33.

\bibitem{KP11}
Kurz, S., Pinchasi, R.\ (2011).
{\it Regular matchstick graphs.}
The American Mathematical Monthly, 118(3), 264--267.

\bibitem{LS22}
Lavoll\'ee, J., Swanepoel, K.\ (2022).
{\it A tight bound for the number of edges of matchstick graphs.}
Discrete and Computational Geometry, 1--15.

\bibitem{L84}
Leighton, F.\ T.\ (1984).
{\it New lower bound techniques for VLSI.}
Mathematical systems theory, 17, 47--70.

\bibitem{M13}
Matousek, J.\ (2013).
{\it Lectures on discrete geometry.}
Vol.\ 212, Springer Science and Business Media.

\bibitem{PA11}
Pach, J.,  Agarwal, P.\ K.\ (2011).
{\it Combinatorial geometry.}
John Wiley and Sons.


\bibitem{PRTT06}
Pach, J., Radoici\'c, R., Tardos, G., T\'oth, G.\ (2005).
{\it Improving the crossing lemma by finding more crossings in sparse graphs.}
Discrete and Computational Geometry 36(4), 527--552.
Also in: Proceedings of the twentieth annual symposium on Computational geometry (pp.\ 68--75).



\bibitem{PRT06}
Pach, J., Radoici\'c, R., T\'oth, G.\ (2006).
{\it Relaxing planarity for topological graphs.}
More Sets, Graphs and Numbers: A Salute to Vera S\'os and Andr\'as Hajnal: 285--300.

\bibitem{PSS94}
Pach J., Shahrokhi F., Szegedy M.\ (1994). 
{\it Applications of the crossing number.} 
Proceedings of the tenth annual symposium on Computational Geometry 1994 Jun 10 (pp.\ 198--202).


\bibitem{PST00}
Pach, J., Spencer J., T\'oth, G.\ (2000).
{\it  New bounds for crossing numbers.}
Proceedings of the 15th Annual ACM Symposium on Computational Geometry 1999, 124--133. 
Also in: Discrete and Computational Geometry 24, 623--644.


\bibitem{PT97}
Pach, J. T\'oth, G.\ (1997).
{\it Graphs drawn with few crossings per edge.}
Combinatorica, 17(3), 427--439.

\bibitem{SST84}
Spencer J., Szemer\'edi E., Trotter W.\ T.\ (1984).
{\it Unit Distances in the Euclidean Plane.}
Graph Theory and Combinatorics, 293-–303.

\bibitem{S14} 
Suk, A.\ (2014).
{\it Coloring intersection graphs of x-monotone curves in the plane.} 
Combinatorica 34(4), 487--505.

\bibitem{V98}
Valtr, P.\ (1998).
{\it On geometric graphs with no k pairwise parallel edges.}
Discrete and Computational Geometry 19, 461--469.

\bibitem{WD17}
Winkler, M., Dinkelacker, P., Vogel, S.\ (2017).
{\it On the existence of 4-regular matchstick graphs.}
arXiv preprint arXiv:1705.00293.

\end{thebibliography}
\end{document}